\algnewcommand{\algorithmicgoto}{\textbf{go to}}
\algnewcommand{\Goto}[1]{\algorithmicgoto~\ref{#1}}
\newcommand{\R}{\mathbb{R}}
\definecolor{orange}{RGB}{230, 159, 0}
\definecolor{skyblue}{RGB}{86, 180, 233}
\definecolor{yellow}{RGB}{240, 228, 66}
\definecolor{blue}{RGB}{0, 114, 178}
\definecolor{vermillion}{RGB}{213, 94, 0}
\theoremstyle{plain} % default
\newtheorem{theorem}{Theorem}[section]
\newtheorem{remark}[theorem]{Remark}
\newtheorem{form}[theorem]{Formulation}
\theoremstyle{definition} %
\theoremstyle{remark} %
\DeclareMathOperator*{\argmax}{arg\,max}
\DeclareMathOperator*{\dG}{dG}
\DeclareMathOperator*{\cG}{cG}
\DeclareMathOperator*{\Ieff}{I_{\text{eff}}}
\newcommand\restrict[1]{\raisebox{-.5ex}{$|$}_{#1}}
\definecolor{brewer1}{HTML}{A6CEE3}
\definecolor{brewer2}{HTML}{1F78B4}
\definecolor{brewer3}{HTML}{B2DF8A}
\definecolor{brewer4}{HTML}{33A02C}
\begin{document}

% \listoftodos

\title{MORe DWR: Space-time goal-oriented error control \\ for incremental POD-based ROM}
\author[1,2]{Hendrik Fischer}
\author[1,2]{Julian Roth}
\author[1,2]{Thomas Wick}
\author[2]{Ludovic Chamoin}
\author[2]{Amélie Fau}

\affil[1]{Leibniz Universit\"at Hannover, Institut f\"ur Angewandte
  Mathematik, \linebreak AG Wissenschaftliches Rechnen, Welfengarten 1, 30167 Hannover, Germany}
\affil[2]{Universit\'e Paris-Saclay, CentraleSupélec, ENS Paris-Saclay, CNRS, LMPS - Laboratoire de Mécanique Paris-Saclay,
91190 Gif-sur-Yvette, France}

\date{}
\maketitle

\setcounter{page}{1}

%%%%%%%%%%%%%%%%%%%%%%%%%%%%%%%%%%%%%%%%%%%%%%%%%%%%%%%%%%%%
%%                     ABSTRACT                           %%
%%%%%%%%%%%%%%%%%%%%%%%%%%%%%%%%%%%%%%%%%%%%%%%%%%%%%%%%%%%%
\begin{abstract}
In this work, the dual-weighted residual (DWR) method is applied to obtain a certified incremental proper orthogonal decomposition (POD) based reduced order model. A novel approach called MORe DWR (\underline{M}odel \underline{O}rder \underline{Re}duction with \underline{D}ual-\underline{W}eighted \underline{R}esidual error estimates) is being introduced. It marries tensor-product space-time reduced-order modeling with time slabbing and an incremental POD basis generation with 
goal-oriented error control based on dual-weighted residual estimates. The error in the goal functional is being estimated during the simulation and the POD basis is being updated if the estimate exceeds a given threshold. 
This allows an adaptive enrichment of the POD basis in case of unforeseen changes in the solution behavior which is of high interest in many real-world applications.
Consequently, the offline phase can be skipped, the reduced-order model is being solved directly with the POD basis extracted from the solution on the first time slab and \mbox{--if necessary--} the POD basis is being enriched on-the-fly during the simulation with high-fidelity finite element solutions. Therefore, the full-order model solves can be reduced to a minimum, which is demonstrated on numerical tests for the heat equation and elastodynamics. 
\end{abstract}

%%%%%%%%%%%%%%%%%%%%%%%%%%%%%%%%%%%%%%%%%%%%%%%%%%%%%%%%%%%%
%%                  INTRODUCTION                          %%
%%%%%%%%%%%%%%%%%%%%%%%%%%%%%%%%%%%%%%%%%%%%%%%%%%%%%%%%%%%%
\section{Introduction}
\label{sec:intro}
Model order reduction (MOR) by means of the proper orthogonal decomposition (POD) has been applied for cheap surrogate modeling to a plethora of partial differential equations (PDEs) \cite{sirovich1987turbulence, kunisch_volkwein_2001, Luo2009,ballarin_supremizer_2015,volkwein_proper_2013,benner2020model,Kerschen2005,BeCoOhlWill15,ABBASZADEH2021109875,WANG2020109402,GIRFOGLIO2022105536}.
Therein, the dynamics is projected onto a set of POD modes that constitute an approximate basis for the solution manifold to reduce the cost of running expensive high-fidelity simulations.
This proper orthogonal decomposition based reduced-order modeling (POD-ROM) is a truth approximation because it yields a compressed representation of an a priori known solution trajectory.
To avoid the necessity of these expensive high-fidelity simulations beforehand, we use error estimates to only locally perform high-fidelity calculations.

The dual-weighted residual method is used in this work to switch between ROM and high-fidelity computations.
The space-time dual-weighted residual (DWR) method is an extension of the DWR method for stationary problems introduced in \cite{BeRa96, becker_rannacher_2001,bangerth_rannacher_2003}, which is based on seminal prior work of Johnson and co-workers \cite{ErikssonEstepHansboJohnson1995}. 
A recent overview on the usage with adaptive predictive multiscale modeling was published by Oden \cite{Od18}.
The space-time DWR method has been applied to parabolic PDEs by Schmich (Besier) and Vexler \cite{SchmichVexler2008}, Schmich (Besier) \cite{Schmich2009} and Besier and Rannacher \cite{BeRa12} and in the authors' own works \cite{ThiWi22_arxiv, RoThiKoeWi2022}. 
Moreover, it has been applied to hyperbolic PDEs in the dissertation of Rademacher \cite{Rade09} and to the wave equation by Bangerth et al. \cite{BangerthGeigerRannacher2010}.
Since the theory for the error estimation is formulated in spatio-temporal function spaces and requires space-time variational formulations, we employ
a space-time finite element method (FEM) discretization; see for instance \cite{LaStein19}.
Space-time finite elements for the heat equation have been studied in \cite{SchmichVexler2008, Schaf21} and for the elastodynamics equation in \cite{HuHu88,BangerthGeigerRannacher2010}.
Similar space-time FEM implementations can be found in FEniCS in \cite{Loveland2022} and in NGSolve in \cite{Lehrenfeld2021, Preuss2018}. 

In recent years, space-time formulations have been applied to model order reduction \cite{Choi2019, kim2021efficient, Tenderini2022, Ekre2020}, including a windowed space-time approach for more efficiency \cite{Shimizu2021}. 
Additional applications of space-time model order reduction include optimal control \cite{Zoccolan2023} and classical error estimates and hyper-reduction estimates using discrete empirical interpolation \cite{Bernreuther2022}.
A lot of research on DWR error estimates for hyper-reduction with reduced quadrature rules has been done by Yano \cite{Yano2020, Sleeman2022}. Another reduced-order modeling approach employing goal-oriented error estimates has been proposed by Meyer and Matthies \cite{Meyer2003}, where the estimates have been used to remove POD basis vectors that are not relevant for the accurate computation of the quantity of interest. 
Finally, related methods include the proper generalized decomposition (PGD) \cite{chinesta2011} and hierarchical model (HiMod) reduction \cite{Perotto2015_spacetime, Perotto2016_fluid, Perotto2020_pgd}, which uses estimates for the POD in the transverse direction of the dynamics.

In this work, we propose a different methodology for POD-ROM computations in which only a small portion of the solution trajectory is being computed with the expensive full-order-model (FOM) and the reduced-order-model (ROM) is being updated on-the-fly when the error estimates exceed a prescribed tolerance. 
This is being accomplished by combining POD-ROM with the incremental POD and space-time dual-weighted residual error estimates. 
We work out the algorithmic details resulting in a final newly 
proposed algorithm for incremental ROM.
The incremental POD method relies on additive rank-b updates of the singular value decomposition \cite{brand2002incremental, brand2006fast} and has successfully been applied to the incremental model order reduction of fluid flows \cite{kuhl2023incremental}. 
As an overall framework, we employ a space-time setting.
More concretely, we rely on the tensor-product space-time FEM implementation from \cite{RoThiKoeWi2022} based on the FEM library deal.II \cite{dealii2019design, dealII94}. The final algorithm is implemented and 
demonstrated with various settings that include parabolic problems (heat equation) and second-order hyperbolic problems (elastodynamics). The main objective is to show the decrease in computational cost by keeping the accuracy of the numerical solutions. Moreover, the error estimator and the goal functional are compared in terms of effectivities. 

The outline of this paper is as follows: In Section \ref{sec:problem}, we formulate the problem for the heat equation and elastodynamics and discretize them with tensor-product space-time finite elements. Next, in Section \ref{sec:ROM} we recapitulate POD-based reduced-order modeling and depict its extension to tensor-product space-time POD-ROM. Then, in Section \ref{sec:error_estimator_certified_ROM} the theories for the space-time error estimates and the incremental model order reduction are elucidated. In Section \ref{sec:numerical_tests}, numerical tests in 1+1D, 2+1D and 3+1D are being conducted for the heat equation and elastodynamics. Finally, our findings are summarized in Section \ref{sec:conclusion}.

%%%%%%%%%%%%%%%
%	Problem   %
%%%%%%%%%%%%%%%
\section{Problem formulation and discretization}
\label{sec:problem}

\subsection{Model problem formulation}
Let $\tilde{d}\in\mathbb{N}$ with ${\tilde{d}}$ depending on whether the problem is vector- or scalar-valued, i.e. for the heat equation we have ${\tilde{d}} = 1$, whereas for elastodynamics in $u$-formulation (where $u$ denotes the displacements) we have ${\tilde{d}} = d$ and for the $(u,v)$-formulation (where $u$ is as before and $v$ denotes the velocity), 
we have ${\tilde{d}} = 2d$, where 
$d \in \{1,2,3\}$ is the spatial dimension.
In the problem description, $I := (0,T)$ denotes the temporal 
domain and $\Omega \subset \mathbb{R}^d$
a sufficiently smooth spatial domain. Here, the spatial boundary is split into a Dirichlet boundary $\Gamma_D \subseteq \partial\Omega$ and a Neumann boundary $\Gamma_N \subsetneq \partial\Omega$ with $\Gamma_D \cap \Gamma_N = \emptyset$. 
We consider the abstract time-dependent problem:
Find $u: \bar{\Omega} \times \bar{I} \rightarrow \mathbb{R}^{\tilde{d}}$ such that
\begin{equation}
\begin{aligned}\label{eq:time_dependent_problem}
    \partial_t u + \mathcal{A}(u) &= f \qquad\quad \text{in } \Omega \times I , \\
    u &= u_D \qquad \text{on } \Gamma_D \times I, \\
    \mathcal{B}(u) &= g_N \qquad \text{on } \Gamma_N \times I, \\
    u &= u^0 \,\,\qquad \text{in } \Omega \times \{ 0 \},
\end{aligned}
\end{equation}
with possibly nonlinear spatial operator $\mathcal{A}$, boundary operator $\mathcal{B}$ and sufficiently regular right-hand side $f$. 
Choosing a suitable continuous spatial function space $V := V(\Omega)$, a continuous temporal functional space $X := X(I, \cdot)$ and time-dependent Sobolev space $X(I, V(\Omega))$ mapping from $I$ into $V(\Omega)$, we can define the continuous spatio-temporal variational formulation as: Find $u \in u_D + X(I, V(\Omega))$ such that %u_D\restrict{\Gamma_D \times I}
\begin{align*}
    A(u)(\varphi) &:= (\!(\partial_t u,\varphi)\!)  + (\!(\mathcal{A}(u),\varphi)\!) + (u(0),\varphi(0)) \\
    &= (\!(f,\varphi)\!) + \langle\!\langle g_N - \mathcal{B}(u), \varphi \rangle\!\rangle_{\Gamma_N} + (u^0, \varphi(0)) =: F(\varphi) \qquad \forall \varphi \in X(I, V(\Omega)),
\end{align*}
where we use the notation
\begin{align*}
    (f,g) := (f,g)_{L^2(\Omega)} := \int_\Omega f \cdot g\ \mathrm{d}x, \qquad (\!(f,g)\!) := (f,g)_{L^2(I, L^2(\Omega))} := \int_I (f, g)\ \mathrm{d}t, \\
    \langle f,g \rangle := \langle f,g \rangle_{L^2(\Gamma)} := \int_\Gamma f \cdot g\ \mathrm{d}s, \qquad \langle\!\langle f,g\rangle\!\rangle := (f,g)_{L^2(I, L^2(\Gamma))} := \int_I \langle f, g\rangle\ \mathrm{d}t.
\end{align*}
In this notation, $f \cdot g$ represents the Euclidean inner product if $f$ and $g$ are scalar- or vector-valued and it stands for the Frobenius inner product if $f$ and $g$ are matrices.
We notice that some partial differential equations (PDE) which fall into this framework are the heat equation and more generally parabolic problems. With a bit of abuse of notation,
elastodynamics formulated as a first-order-in-time system can also be written in the above form, which we however precise below for the sake of mathematical precision.

\subsubsection{Heat equation}
The strong formulation of the heat equation reads: Find the temperature $u: \bar{\Omega} \times \bar{I} \rightarrow \mathbb R$ such that
\begin{align*}
    \partial_{t} u - \Delta_x u = f \qquad \quad \text{in } \Omega \times I,
\end{align*}
with $\mathcal{A}(u) := - \Delta_x u$ in (\ref{eq:time_dependent_problem}).
The initial and boundary conditions are given by
\begin{align*}
    u &= u^0 \qquad \text{on } \Omega \times \{ 0 \}, \\
    u &= 0 \qquad \text{on } \partial\Omega \times I.
\end{align*}
We thus arrive at the continuous variational formulation:
\begin{form}[Continuous variational formulation of the heat equation]\label{form:variational_heat}\ \\
    Find $u \in X(I, V(\Omega)) := \{v \in L^2(I, H^1_0(\Omega)) \mid  \partial_t v \in L^2(I, (H^1_0(\Omega))^\ast) \}$ such that
    \begin{align*}
        A(u)(\varphi) := (\!(\partial_t u,\varphi)\!)  + (\!(\nabla_x u, \nabla_x \varphi)\!) + (u(0),\varphi(0)) = (\!(f,\varphi)\!) + (u^0, \varphi(0)) =: F(\varphi) \qquad \forall \varphi \in X(I, V(\Omega)).
    \end{align*}
\end{form}
\noindent For this variational formulation, we use $u_0 \in L^2(\Omega)$ and $f \in L^2(I, H^1_0(\Omega)^\ast)$ \cite{wloka1987}. Here $H^1_0(\Omega)^\ast$ denotes the dual space of $H^1_0(\Omega)$.

\subsubsection{Elastodynamics equation}
\label{sec:elasto_equation}
The strong formulation of linear elastodynamics in three spatial dimensions reads:
Find the displacement $u: \bar{\Omega} \times \bar{I} \rightarrow \mathbb{R}^d$ such that 
\begin{align*}
    \partial_{tt} u &- \nabla_x \cdot \sigma(u) = 0 \qquad \quad \text{in } \Omega \times I,
\end{align*}
with
\begin{align*}
   \sigma(u) &= 2 \mu E(u) + \lambda \operatorname{tr}(E(u))\mathbbm{1}_{d \times d}, \tag{{stress tensor}} \\
   E(u) &= \frac{1}{2}(\nabla_x u + (\nabla_x u)^T), \tag{{linearized strain tensor}}
\end{align*}
where $\mathbbm{1}_{d \times d} \in \mathbb{R}^{d\times d}$ is the identity matrix and the Lamé parameters are $\mu > 0$ and $\lambda > -\frac{2}{3}\mu$.
%\in \mathbb{R}$ with $3\lambda + 2\mu > 0$.
The initial conditions are given by 
\begin{align*}
    u &= u^0 \qquad \text{on } \Omega \times \{ 0 \}, \\
    \partial_t u &= v^0 \qquad \text{on } \Omega \times \{ 0 \}.
\end{align*}
As boundary conditions, we prescribe
\begin{align*}
    u &= 0 \qquad\quad \text{on } \Gamma_D \times I, \\ 
    \mathcal{B}(u) = \sigma(u) \cdot n &= g_N \qquad\,\, \text{on } \Gamma_N \times I.
\end{align*}
We convert this into a first-order system in time and solve for displacement $u: \bar{\Omega} \times \bar{I} \rightarrow \mathbb{R}^d$ and velocity $v: \bar{\Omega} \times \bar{I} \rightarrow \mathbb{R}^d$ such that
\begin{align*}
    \partial_t v - \nabla_x \cdot \sigma(u) &= f \qquad \quad \text{in } \Omega \times I, \\
    \partial_t u - v &= 0 \qquad \quad \text{in } \Omega \times I,
\end{align*}
with $\mathcal{A}(u,v) := -\nabla_x \cdot \sigma(u) - v$ in (\ref{eq:time_dependent_problem}).
We still have the same initial and boundary conditions with the only difference that we now have
\begin{align*}
    v &= v^0 \qquad \text{on } \Omega \times \{ 0 \}, \\
    v &= 0 \qquad\quad \text{on } \Gamma_D \times I.
\end{align*}
For the variational formulation, we use $u_0 \in H^1_{\Gamma_D, 0}(\Omega)^d$, which is the space of weakly differentiable functions that vanish on $\Gamma_D$, $ v_0 \in L^2(\Omega)^d, g_N \in L^2(I, L^2(\Gamma_N)^d)$ and the function spaces
\begin{align*}
    X(I, V^u(\Omega)) &:= \{ v \in L^2(I,H^1_{\Gamma_D, 0}(\Omega)^d) \mid  \partial_t v \in L^2(I, L^2(\Omega)^d), \partial^2_t v \in L^2\left(I, (H^1_{\Gamma_D, 0}(\Omega)^d)^\ast\right)  \}, \\
    X(I, V^v(\Omega)) &:= \{ v \in L^2(I, L^2(\Omega)^d) \mid  \partial_t v \in L^2\left(I, (H^1_{\Gamma_D, 0}(\Omega)^d)^\ast\right)  \},\\
    X(I, V(\Omega)) &:=  X(I, V^u(\Omega)) \times  X(I, V^v(\Omega)).
\end{align*}
\noindent We thus solve the continuous variational formulation: 
\begin{form}[Continuous variational formulation of the elastodynamics equation]\label{form:variational_elasto}\ \\
    Find $U = (u, v) \in X(I, V(\Omega))$ such that
    \begin{align*}
        A(U)(\Phi) = F(\Phi) \qquad \forall \Phi = (\varphi^u, \varphi^v) \in X(I, V(\Omega)),
    \end{align*}
    where
    \begin{align*}
        A(U)(\Phi) &:= (\!( \partial_t v, \varphi^u)\!) + (\!( \sigma(u), \nabla_x \varphi^u)\!) + (v(0), \varphi^u(0)) + (\!( \partial_t u, \varphi^v)\!) - (\!( v, \varphi^v)\!) + (u(0), \varphi^v(0)), \\
         F(\Phi) &:=(v^0, \varphi^u(0)) + \langle\!\langle g_N, \varphi^u \rangle\!\rangle_{\Gamma_N} + (u^0, \varphi^v(0)).
    \end{align*}
\end{form}

\subsection{Tensor-product space-time FEM discretization}\label{subsec:tensorproduct_spacetime_FEM_discretization}
We follow our recent work on space-time adaptivity for the Navier-Stokes equations \cite{RoThiKoeWi2022} and use tensor-product space-time finite elements (FEM) with discontinuous finite elements in time ($\dG$) and continuous finite elements in space ($\cG$). Using the tensor-product of the temporal and spatial basis functions is a special case of the broad class of space-time finite element methods \cite{LaStein19}. We will now explain tensor-product space-time FEM at the example of the heat equation, where the function spaces can be found in \cite{SchmichVexler2008} and the slabwise tensor-product space-time implementation is being outlined in \cite{ThiWi22_arxiv}. We assume that the spatial mesh remains fixed, which simplifies the analysis and the implementation. Furthermore, we outline the extension of this methodology to elastodynamics.

\subsubsection{Discretization in time}
% \noindent 
Let $\mathcal{T}_k := \{ I_m := (t_{m-1}, t_m) \mid 1 \leq m \leq M \}$ be a partitioning of time, i.e. $ \bar{I} = [0,T] = \bigcup_{m = 1}^M \bar{I}_m $. 
We now introduce broken continuous level function spaces
\begin{align*}
    \tilde{X}(\mathcal{T}_k, V(\Omega)) := \{ v \in L^2(I, L^2(\Omega)) \mid v\restrict{I_m} \in X(I_m, V(\Omega))\quad \forall I_m \in \mathcal{T}_k \}
\end{align*}
for the heat equation and 
\begin{align*}
    \tilde{X}(\mathcal{T}_k, V^u(\Omega)) &:= \{ v \in L^2(I, L^2(\Omega)^3) \mid v\restrict{I_m} \in X(I_m, V^u(\Omega))\quad \forall I_m \in \mathcal{T}_k \}, \\
    \tilde{X}(\mathcal{T}_k, V^v(\Omega)) &:= \{ v \in L^2(I, L^2(\Omega)^3) \mid v\restrict{I_m} \in X(I_m, V^v(\Omega))\quad \forall I_m \in \mathcal{T}_k \}, \\
    \tilde{X}(\mathcal{T}_k, V(\Omega)) &:= \tilde{X}(\mathcal{T}_k, V^u(\Omega)) \times \tilde{X}(\mathcal{T}_k, V^v(\Omega))
\end{align*}
for the elastodynamics equation. These broken function spaces \cite{DiPietroErn2012} are required, since we want to perform a conforming discontinuous Galerkin discretization in time and thus need to allow for discontinuities between time intervals/temporal elements. 
Due to these discontinuities, we define the limits of $f$ at time $t_m$ from above and from below for a function $f$ as
\begin{align*}
    f_{m}^\pm := \lim_{\epsilon \searrow 0} f(t_m \pm \epsilon),
\end{align*}
and the jump of the function value of $f$ at time $t_m$ as
\begin{align*}
    [f]_m := f_{m}^+ - f_{m}^-.
\end{align*}
The function spaces enable us to include discontinuities in the variational formulations:
\begin{form}[Time-discontinuous variational formulation of the heat equation]\label{form:heat_with_jumps}\ \\
    Find $u \in \tilde{X}(\mathcal{T}_k, V(\Omega))$ such that
    \begin{align*}
        \tilde{A}(u)(\varphi) = \tilde{F}(\varphi) \qquad \forall \varphi \in \tilde{X}(\mathcal{T}_k, V(\Omega)),
    \end{align*}
    where
    \begin{align*}
        \tilde{A}(u)(\varphi) &:= \sum_{m = 1}^M \int_{I_m} (\partial_t u,\varphi)  + (\nabla_x u,\nabla_x \varphi)\ \mathrm{d}t + \sum_{m = 1}^{M-1}([u]_m, \varphi_{m}^{+}) + (u_0^+,\varphi_0^+), \\
        \tilde{F}(\varphi) &:= (\!(f,\varphi)\!) + (u^0, \varphi_0^+).
    \end{align*}
\end{form}
\begin{form}[Time-discontinuous variational formulation of the elastodynamics equation]\label{form:elasto_with_jumps}\ \\
    Find $U = (u, v) \in \tilde{X}(\mathcal{T}_k, V(\Omega))$ such that
    \begin{align*}
        \tilde{A}(U)(\Phi) = \tilde{F}(\Phi) \qquad \forall \Phi = (\varphi^u, \varphi^v) \in \tilde{X}(\mathcal{T}_k, V(\Omega)),
    \end{align*}
    where
    \begin{align*}
        \tilde{A}(U)(\Phi) &:= \sum_{m = 1}^M \int_{I_m} (\partial_t v, \varphi^u) + ( \sigma(u), \nabla_x \varphi^u) + (\partial_t u, \varphi^v) - ( v, \varphi^v)\ \mathrm{d}t \\
        &\qquad +\sum_{m = 1}^{M-1}([v]_m, \varphi_m^{u,+}) + ([u]_m, \varphi_m^{v,+}) + (v_0^+, \varphi_0^{u,+})  + (u_0^+, \varphi_0^{v,+}), \\
        \tilde{F}(\Phi) &:= (v^0, \varphi_0^{u,+}) + \langle\!\langle g_N, \varphi^u \rangle\!\rangle_{\Gamma_N} + (u^0, \varphi_0^{v,+}).
    \end{align*}
\end{form}
\noindent We have the inclusions $X(I, \cdot) \subset \tilde{X}(\mathcal{T}_k, \cdot)$, since for continuous functions the jump terms vanish, and thus the variational Formulation \ref{form:heat_with_jumps} and Formulation \ref{form:elasto_with_jumps} are consistent. 

Next, we define the semi-discrete space for the heat equation as
\begin{align*}
    X_k^{\dG(r)}(\mathcal{T}_k, V(\Omega)) := \left\lbrace v_k \in L^2(I, L^2(\Omega))\, \middle| \, v_k\restrict{I_m} \in P_r(I_m, H^1_{0}(\Omega)) \right\rbrace \subset \tilde{X}(\mathcal{T}_k, V(\Omega))
\end{align*}
and for the elastodynamics equation as
\begin{align*}
    X_k^{\dG(r)}(\mathcal{T}_k, V^u(\Omega)) &:= \left\lbrace v_k \in L^2(I, L^2(\Omega)^3)\, \middle| \, v_k\restrict{I_m} \in P_r(I_m, H^1_{\Gamma_D, 0}(\Omega)^3) \right\rbrace \subset \tilde{X}(\mathcal{T}_k, V^u(\Omega)), \\
    X_k^{\dG(r)}(\mathcal{T}_k, V^v(\Omega)) &:= X_k^{\dG(r)}(\mathcal{T}_k, V^u(\Omega)), \\
    X_k^{\dG(r)}(\mathcal{T}_k, V(\Omega)) &:= X_k^{\dG(r)}(\mathcal{T}_k, V^u(\Omega)) \times X_k^{\dG(r)}(\mathcal{T}_k, V^v(\Omega)),
\end{align*}
where the space-time function spaces $\tilde{X}(\mathcal{T}_k, \cdot)$ have been discretized in time with the discontinuous Galerkin method of order $r\in \mathbb{N}_0$ ($\dG(r)$). Typical choices in our work for the temporal degree are $r = 1$ and $r = 2$. Here, $P_r(I_m, Y)$ is the space of polynomials of order $r$, which map from the time interval $I_m$ into the space $Y$.
The $\dG(r)$ time discretization for the case $r = 1$ is illustrated in Figure~\ref{fig:dG1_time_discretization}.
\begin{figure}[H]
    \centering
    \includegraphics[scale=1.2]{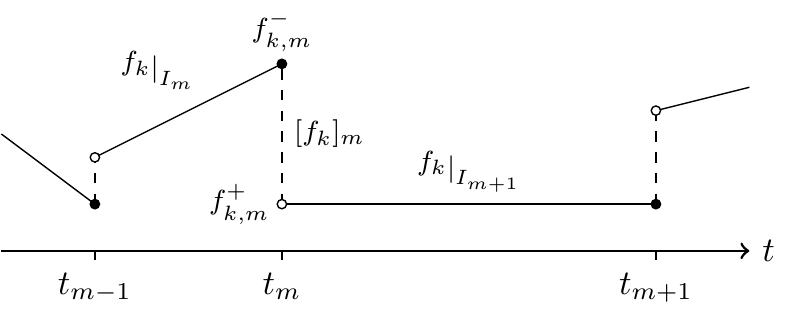}
    \caption{dG(1) time discretization}
    \label{fig:dG1_time_discretization}
\end{figure}
\noindent The locations of the temporal degrees of freedom (DoFs) are defined by quadrature rules. Due to the discontinuity of the temporal discretization, various quadrature rules can be chosen, the most common being Gauss-Lobatto, Gauss-Legendre and Gauss-Radau. In Figure \ref{fig:dG1_time_discretization} the location of the temporal degrees of freedom are chosen at the ends of the time intervals, which corresponds to Gauss-Lobatto quadrature. In Section~\ref{sec:numerical_tests}, we use Gauss-Legendre and Gauss-Lobatto quadrature in time to demonstrate the versatility of our method concerning the choice of the temporal quadrature formula.

It has been derived in \cite{DeHaTr81} (see also the classical textbooks \cite{Rannacher2017, eriksson1996}) that the $\dG(0)$ time-discretization is a variant of the backward Euler scheme. Higher-order schemes 
are derived as well and it was established that $dG(r_p)$ discretizations, where $r_p\in\mathbb{N}_0$ is the polynomial degree, are generically implicit and $A$-stable.

\subsubsection{Discretization in space}
For the spatial discretization of the variational formulation, we use a fixed mesh $\mathcal{T}_h$, which consists of intervals in one dimension and of quadrilateral (2D) or hexahedral (3D) elements in higher dimensions. We can then use element-wise polynomial functions of up to order $s \in \mathbb{N}$ as our spatial function space, i.e.,
\begin{align*}
    V_h^s := V_h^s(\mathcal{T}_h) := \left\lbrace v \in  C(\bar \Omega) \middle| v\restrict{K} \in \mathcal{Q}_{s}(K) \quad  \forall K \in \mathcal{T}_h \right\rbrace
\end{align*}
and for the elastodynamics equation
\begin{align*}
    V_h^{s,u} := V_h^{s,u}(\mathcal{T}_h) := \left\lbrace v \in  C(\bar \Omega)^d \middle| v\restrict{K} \in (\mathcal{Q}_{s}(K))^d \quad  \forall K \in \mathcal{T}_h \right\rbrace =: V_h^{s,v}(\mathcal{T}_h) =: V_h^{s,v},
\end{align*}
where $\mathcal{Q}_{s}(K)$ is being constructed by mapping tensor-product polynomials of degree $s$ from the master element $(0,1)^d$ to the element $K$. The fully discrete function space for the heat equation is then given by
\begin{align*}
    X_k^{\dG(r)}(\mathcal{T}_k, V_h^s) := \left\lbrace v_{kh} \in L^2(I, L^2(\Omega))\, \middle| \, v_{kh}\restrict{I_m} \in P_r(I_m, V_h^s) \quad \forall I_m \in \mathcal{T}_k \right\rbrace
\end{align*}
and for the elastodynamics equation
\begin{align*}
    X_k^{\dG(r)}(\mathcal{T}_k, V_h^{s}) &:= \left\lbrace v_{kh} \in L^2(I, L^2(\Omega)^{2d})\, \middle| \, v_{kh}\restrict{I_m} \in P_r(I_m, V_h^{s}) \quad \forall I_m \in \mathcal{T}_k \right\rbrace, \\
    V_h^{s} &:= V_h^{s,u} \times V_h^{s, v}.
\end{align*}
Thus, the fully discrete variational formulation reads for the heat equation:\\
\indent Find $u_{kh} \in X_k^{\dG(r)}(\mathcal{T}_k, V_h^s)$ such that 
\[
\tilde{A}(u_{kh})(\varphi_{kh}) = \tilde{F}(\varphi_{kh}) \quad \forall \varphi_{kh} \in X_k^{\dG(r)}(\mathcal{T}_k, V_h^s).
\]
Moreover, the fully discrete variational formulation for the elastodynamics equation reads:\\
\indent Find $U_{kh} := (u_{kh}, v_{kh}) \in X_k^{\dG(r)}(\mathcal{T}_k, V_h^{s})$ such that
\begin{align*}
    \tilde{A}(U_{kh})(\Phi_{kh}) = \tilde{F}(\Phi_{kh}) \quad \forall \Phi_{kh} = (\varphi_{kh}^u, \varphi_{kh}^v) \in X_k^{\dG(r)}(\mathcal{T}_k, V_h^{s}).
\end{align*}

\subsubsection{Slabwise discretization} \label{subsection:slabwise_discretization}
Finally, we want to remark that the fully discrete variational formulations do not need to be solved on the entire space-time cylinder $\Omega \times I$, but can also be solved sequentially on space-time slabs
\begin{align*}
   S_l^n :=  \Omega \times \left( \bigcup\limits_{m = l}^{n} I_m \right),
\end{align*}
where $1 \leq l \leq n \leq M$, see also \cite{ThiWi22_arxiv}[Remark 2.1]. As mentioned previously, we can then get the space-time FEM basis on $S_l^n$ by taking the tensor-product of the spatial and the temporal finite element basis functions. This simplifies the finite element discretization of the abstract time-dependent problem (\ref{eq:time_dependent_problem}), since the main prerequisite is a FEM code for the stationary problem $\mathcal{A}(u) = f  \text{ in } \Omega$. Furthermore, tensor-product space-time FEM allows for larger flexibility in the choice of temporal discretization, since changing the temporal degree of the space-time discretization can be performed simply by
changing the polynomial degree of the temporal finite elements. Due to the tensor-product structure of the space-time FE basis, it is straightforward how proper orthogonal decomposition (POD) based reduced-order modeling can be performed, since on an abstract level only the spatial finite element basis needs to be replaced by the spatial POD basis. 

For the heat equation on the space-time slab $S_l^n$ with $n-l+1$ time intervals, we arrive at the linear equation system 
\begin{align}
\label{eq:LES_FOM}
    \begin{pmatrix}
        A & & &  & \bm{0} \\
        B & A & & &  \\
         & B & A &  &  \\
         & & \ddots & \ddots & \\
        \bm{0} & & & B & A
    \end{pmatrix}
    \begin{pmatrix}
        U_{l} \\
        U_{l + 1} \\
        U_{l + 2} \\
        \vdots \\
        U_{n}
    \end{pmatrix}
    = 
    \begin{pmatrix}
        F_{l} - B U_{l-1} \\
        F_{l + 1} \\
        F_{l + 2} \\
        \vdots \\
        F_{n}
    \end{pmatrix} 
\end{align}
or in brevity 
\begin{align}\label{eq:LES_heat_fom}
    A_{S_l^n} U_{S_l^n} = F_{S_l^n}
\end{align}
with 
\begin{align*}
    A &= C_k \otimes M_h + M_k \otimes K_h,\\
    B &= -D_k \otimes M_h ,
\end{align*}
where we use the spatial matrices
\begin{align*}
    M_h &= \left\{ (\varphi_h^{(j)},\varphi_h^{(i)}) \right\}_{i,j = 1}^{\# \text{DoFs}(\mathcal{T}_h)}, \\
    K_h &= \left\{ (\nabla_x\varphi_h^{(j)},\nabla_x\varphi_h^{(i)}) \right\}_{i,j = 1}^{\# \text{DoFs}(\mathcal{T}_h)}
\end{align*}
and the temporal matrices
\begin{align*}
    M_k &= \left\{ \int_{I_m}\varphi_k^{(j)}\cdot \varphi_k^{(i)}\ \mathrm{d}t \right\}_{i,j = 1}^{\# \text{DoFs}(I_m)}, \\
    C_k &= \left\{ \int_{I_m}\partial_t\varphi_k^{(j)}\cdot \varphi_k^{(i)}\ \mathrm{d}t + \varphi_{k,m-1}^{(j),+}\cdot \varphi_{k,m-1}^{(i),+} \right\}_{i,j = 1}^{\# \text{DoFs}(I_m)}, \\
    D_k &= \left\{ \varphi_{k,m-1}^{(j),-}\cdot \varphi_{k,m-1}^{(i),+} \right\}_{i,j = 1}^{\# \text{DoFs}(I_m)}.
\end{align*}
Note that $U_{l}, \dots, U_{n}$ are space-time vectors themselves, where $U_m \in \mathbb{R}^{\# \text{DoFs}(I_m)\, \cdot\, \# \text{DoFs}(\mathcal{T}_h)}$ with $m = l, \dots, n$ is the coefficient vector of the solution $u_{kh}$ on the time interval $I_m$, i.e., for the $\dG(r)$ method in time with temporal quadrature points $t_1, \dots, t_{r+1}$ we have
\begin{align*}
    U_{m} = \begin{pmatrix}
        U_{m} (t_1) \\
        \vdots \\
        U_{m} (t_{r+1})
    \end{pmatrix}, \quad m = 1, \dots, M,
\end{align*}
where $M$ is the total number of time intervals.
In particular, if we use space-time slabs that contain only one temporal element, then we only need to solve the linear system
\begin{align*}
    AU_m = F_m - BU_{m-1}
\end{align*}
for each time slab $S_m := S_m^m = \mathcal{T}_h \times I_m$. For efficiency reasons, in the remainder of this paper, we only consider such slabs of size one.

For the elastodynamics equation, the space-time FEM linear system can be derived similarly. The linear system and time-stepping formulations for $\dG(1)$ and $\dG(2)$ with Gauss-Lobatto quadrature in time can be found in \ref{sec:elasto_time_stepping}. 

\begin{remark}
    Although the linear systems for the heat equation in this section and for the elastodynamics equation in \ref{sec:elasto_time_stepping} have been presented as the tensor product of spatial matrices, tensor-product space-time FEM can be applied to a much larger class of problems. For instance, it is not always possible to decompose a space-time linear system into this tensor-product structure when the PDE contains coefficients that depend on space and time. 
    Nevertheless, our implementation of tensor-product space-time FEM is general enough to also deal with these kinds of problems, since it does not rely on a tensor-product of the linear system but only on the tensor-product structure of the finite element basis.
\end{remark}

%%%%%%%%%%%%%
%	ROM     %
%%%%%%%%%%%%%
\section{Reduced-order modeling}
\label{sec:ROM}
\subsection{POD-ROM}
The increase in computational power in the last decades has made it possible to exploit high-performance computing for large-scale numerical simulations. Nevertheless, in some scenarios, e.g. for multiphysics problems, high-performance computing can be computationally expensive, in particular also having a large carbon footprint and enormous energy consumption. 
These circumstances motivate the application of model order reduction (MOR) techniques on the premise of a large computational speedup to satisfy these demands. In this work, we mainly deal with projection-based reduced basis methods (RBM) \cite{haasdonk2008reduced,hesthaven2016certified,bertagna2014model,lassila_model_2014,haasdonk2017reduced,rozza2008reduced,nguyen2009reduced} since this methodology aims at efficient treatments by providing both an approximate solution procedure and efficient error estimates \cite{haasdonk2008reduced}. Here, the critical observation is that instead of using projection spaces with general approximation properties (e.g. finite element method) problem-specific approximation spaces are chosen and then can be used for the discretization of the original problem \cite{rozza2005shape}. Based on these spaces and the assumption that the solution evolves smoothly in a low-dimensional solution manifold (equivalent to a small Kolmogorov N-width \cite{kolmogoroff1936uber,benner2020model,hesthaven2016certified}), a reduced-order model (ROM) can be constructed that represents with sufficient accuracy the physical problem of interest using a significantly smaller number of degrees of freedom \cite{rozza2005shape}. 

In order to construct the reduced spaces, the solution manifold is empirically explored  by means of solutions of the full-order model as developed in Section \ref{subsec:tensorproduct_spacetime_FEM_discretization}. Then, a proper orthogonal decomposition (POD) is conducted on these snapshots of the high-fidelity solution
to obtain the reduced basis functions
\cite{lassila_model_2014,benner2020model,kunisch2002galerkin,ravindran2000reduced,sirovich1987turbulence,willcox2002balanced,christensen1999evaluation,gunzburger2007reduced,caiazzo2014numerical,baiges2013explicit}. The following Theorem \ref{thm:pod_basis} states that the POD basis is optimal in a least-squares sense. The proof is provided by Gubisch and Volkwein in \cite{gubisch2017proper}. 
\begin{theorem}[POD basis]\label{thm:pod_basis}
    Let $Y = [Y_1, \dots, Y_q] := [U_1(t_1), \dots, U_1(t_{r+1}), U_2(t_{1}), \dots, U_M(t_{r+1})] \in \mathbb{R}^{n \times q}$ with $q= M \cdot {\# \text{DoFs}(I_m)}$, $n = \# \text{DoFs}(\mathcal{T}_h)$ and rank $d \leq \min(n,q)$ be the snapshot matrix with a (spatial) column vector for each temporal degree of freedom. Moreover, let $Y = \Psi \Sigma \Phi^T$ be its singular value decomposition with $\Sigma = \text{diag}(\sigma_1, \dots, \sigma_d) \in \R^{d \times d}$ and orthogonal matrices $\Psi = [\psi_1, \dots, \psi_d ] \in \mathbb{R}^{n \times d}$, $\Phi = [\phi_1, \dots, \phi_d ] \in \mathbb{R}^{q \times d}$. Then for $1 \leq N \leq d$ the optimization problem
    \begin{align}\label{eq:pod_optmizaton_problem}
                \min_{\tilde{\psi}_1,\dots,\tilde{\psi}_N\in \mathbb{R}^n} \sum_{j = 1}^q \Big\| Y_j - \sum_{i = 1}^N \left(Y_j, \tilde{\psi}_i\right)_{\mathbb{R}^n}\tilde{\psi}_i \Big\|_{\mathbb{R}^n}^2 \quad \text{s.t.} \quad (\tilde{\psi}_i,\tilde{\psi}_j)_{\mathbb{R}^n} = \delta_{ij}\ \forall 1 \leq i,j \leq N \tag{${\textbf{P}}^N$}
    \end{align}
    where $\lbrace \tilde\psi_i \rbrace_{i = 1}^N \subset \mathbb{R}^n$, and which is being solved by the left-singular vectors $\lbrace \psi_i \rbrace_{i = 1}^N \subset \mathbb{R}^n$ 
    and it holds that
    \begin{align}\label{eq:error_POD}
        \sum_{j = 1}^q \Big\| Y_j - \sum_{i = 1}^N \left(Y_j, {\psi}_i\right)_{\mathbb{R}^n}{\psi}_i \Big\|_{\mathbb{R}^n}^2 = \sum_{i = N+1}^d \sigma_i^2 = \sum_{i = N+1}^d \lambda_i.
    \end{align}
\end{theorem}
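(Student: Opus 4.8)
The plan is to run the classical Eckart--Young--Mirsky / SVD-optimality argument, adapted to this projection problem. For an admissible orthonormal family $\{\tilde\psi_i\}_{i=1}^N$, introduce the orthogonal projector $P := \sum_{i=1}^N \tilde\psi_i\tilde\psi_i^\top$ of $\R^n$ onto $\spann\{\tilde\psi_1,\dots,\tilde\psi_N\}$, so that $Y_j - \sum_{i=1}^N (Y_j,\tilde\psi_i)_{\R^n}\tilde\psi_i = (I-P)Y_j$, where $I$ denotes the $n\times n$ identity. Stacking the columns and using $P=P^\top=P^2$ together with the cyclicity of the trace,
\[
\sum_{j=1}^q \Big\|(I-P)Y_j\Big\|_{\R^n}^2 = \|(I-P)Y\|_F^2 = \operatorname{tr}(YY^\top) - \operatorname{tr}(P\,YY^\top) = \operatorname{tr}(YY^\top) - \sum_{i=1}^N \big(YY^\top\tilde\psi_i,\tilde\psi_i\big)_{\R^n}.
\]
Hence $(\textbf{P}^N)$ is equivalent to \emph{maximizing} $\sum_{i=1}^N (YY^\top\tilde\psi_i,\tilde\psi_i)_{\R^n}$ over orthonormal $N$-frames of $\R^n$; a maximizer exists since the Stiefel manifold of such frames is compact and the objective continuous.

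Next I would diagonalize. From $Y=\Psi\Sigma\Phi^\top$ and $\Phi^\top\Phi=I_d$ one obtains $YY^\top=\Psi\Sigma^2\Psi^\top$; completing $\psi_1,\dots,\psi_d$ to an orthonormal basis $\psi_1,\dots,\psi_n$ of $\R^n$ and setting $\lambda_k:=\sigma_k^2$ for $k\le d$ and $\lambda_k:=0$ for $k>d$, the $\psi_k$ are eigenvectors of $YY^\top$ with $\lambda_1\ge\dots\ge\lambda_n\ge 0$. Writing $\tilde\psi_i=\sum_{k=1}^n\alpha_{ik}\psi_k$ gives $(YY^\top\tilde\psi_i,\tilde\psi_i)_{\R^n}=\sum_{k=1}^n\lambda_k\alpha_{ik}^2$, so
\[
\sum_{i=1}^N \big(YY^\top\tilde\psi_i,\tilde\psi_i\big)_{\R^n} = \sum_{k=1}^n \lambda_k\, c_k, \qquad c_k := \sum_{i=1}^N \alpha_{ik}^2 .
\]
The matrix $A:=(\alpha_{ik})\in\R^{N\times n}$ has orthonormal rows, so $A^\top A$ is a symmetric idempotent of rank $N$ on $\R^n$; its diagonal entries are precisely the $c_k$, whence $0\le c_k\le 1$ and $\sum_{k=1}^n c_k=\operatorname{tr}(A^\top A)=N$.

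It then remains to maximize $\sum_{k=1}^n\lambda_k c_k$ over $c\in[0,1]^n$ subject to $\sum_k c_k=N$, an elementary linear program whose optimum, because $\lambda_1\ge\dots\ge\lambda_n$, puts unit weight on the $N$ largest $\lambda_k$: the maximum is $\sum_{k=1}^N\lambda_k$, attained at $c_1=\dots=c_N=1$, $c_{N+1}=\dots=c_n=0$, i.e.\ at $\tilde\psi_i=\psi_i$ (for which $\alpha_{ik}=\delta_{ik}$). Thus $\{\psi_i\}_{i=1}^N$ solves $(\textbf{P}^N)$, and since $\operatorname{tr}(YY^\top)=\sum_{k=1}^d\lambda_k$ the minimal value is $\sum_{k=1}^d\lambda_k-\sum_{k=1}^N\lambda_k=\sum_{i=N+1}^d\lambda_i=\sum_{i=N+1}^d\sigma_i^2$, which is the asserted identity \eqref{eq:error_POD}. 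As a direct cross-check, for $\tilde\psi_i=\psi_i$ one has $(I-P)Y=\sum_{i=N+1}^d\sigma_i\psi_i\phi_i^\top$, whose squared Frobenius norm equals $\sum_{i=N+1}^d\sigma_i^2$ by orthonormality of the $\psi_i$ and $\phi_i$.

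The main obstacle, everything else being bookkeeping, is the pair of bounds $0\le c_k\le 1$: they are what prevents the total weight $N$ from concentrating on a single dominant eigendirection and hence pin the optimum to the $N$ largest eigenvalues. The clean way to obtain them is the observation that $AA^\top=I_N$ (orthonormal rows) forces $A^\top A$ to be a symmetric idempotent, so all of its diagonal entries lie in $[0,1]$ --- this projection/majorization fact is really the whole content of the statement. One should also note that only the \emph{existence} of the minimizer $\{\psi_i\}_{i=1}^N$ is claimed, not its uniqueness, so repeated singular values cause no difficulty.
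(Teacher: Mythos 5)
Your proof is correct. Note that the paper itself does not prove this theorem; it cites Gubisch and Volkwein, whose argument derives the first-order optimality conditions of $(\textbf{P}^N)$ as the eigenvalue problem for $YY^\top$ and proceeds by induction over $N$, whereas you reduce the problem in one stroke to maximizing $\operatorname{tr}(P\,YY^\top)$ over rank-$N$ orthogonal projectors and settle it with the Ky Fan / majorization bound $0\le c_k\le 1$, $\sum_k c_k=N$. Both routes are classical and equivalent in content; yours has the advantage of handling all $N$ simultaneously without induction and of isolating exactly where the ordering of the eigenvalues enters (the linear program over the diagonal of the idempotent $A^\top A$), at the cost of being slightly less constructive about characterizing \emph{all} minimizers when singular values repeat --- which, as you correctly observe, the statement does not require.
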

Thus, the decay rate of the singular values plays an essential role in the feasibility of the POD approach. If the sum of the squared truncated singular values is sufficiently small for a relatively small $N$, we can utilize a linear combination of a few basis functions $\psi_i$ for a good approximation of elements $Y_j$ living in the high-dimensional FE space.
Although the error of an obtained rank-$N$ approximation can be determined by Equation \eqref{eq:error_POD}, this does not yield an intuitive measure for rank determination. Thus, a widely used criterion to determine the quality of the POD basis heuristically refers to its retained energy or information content $\varepsilon(N)$, cf. \cite{grassle2018pod, gubisch2017proper, lassila_model_2014}. The latter is defined by 
\begin{align} \label{equ:retained_energy}
    \varepsilon(N) = \frac{\sum_\mathrm{i=1}^{N} \sigma_{i}^2}{\sum_{i=1}^{d} \sigma_i^2} = \frac{\sum_{i=1}^{N} \sigma_i^2}{\sum_{i=1}^\mathrm{q} ||{U}_i||^2}. 
\end{align}
Next, the construction of the POD basis is presented. In Algorithm \ref{algo:pod_basis_R_m}, we introduce different approaches depending on the row-to-column ratio of the snapshot matrix. For this, we partly rely on the work of Gr\"aßle et al. in \cite{benner2020model}[Chap. 2].
\begin{algorithm}[H]
    \caption{POD basis generation in $\mathbb{R}^n$} \label{algo:pod_basis_R_m}
    \hspace*{\algorithmicindent} \textbf{Input:} Snapshots $\lbrace Y_j \rbrace_{j = 1}^q \subset \mathbb{R}^n$ and energy threshold $\varepsilon \in [0,1]$.\\
    \hspace*{\algorithmicindent} \textbf{Output:} POD basis $\lbrace \bm{\psi}_i \rbrace_{i = 1}^N\subset \mathbb{R}^n$ and eigenvalues $\lbrace \lambda_i \rbrace_{i = 1}^N$.
    \begin{algorithmic}[1]
        \State Set $Y = [Y_1, \dots, Y_q ] \in \mathbb{R}^{n \times q}$.
        \If{$n \approx q$}
            \State Compute singular value decomposition $[\Psi, \Sigma, \Phi] = \operatorname{SVD}(Y)$.
            \State Compute \scalebox{0.9}{\parbox{1.125\linewidth}{$N = \min\left\lbrace N \in \mathbb{N}\ \middle|\ \varepsilon(N) \geq \varepsilon,\ \ 1 \leq N \leq d \right\rbrace$.}}
            \State Set $\lambda_i = \Sigma_{ii}^2$ and $\bm{\psi}_i = \Psi_{\cdot, i} \in \mathbb{R}^n$ for $1 \leq i \leq N$.
        \ElsIf{$n \ll q$}
            \State Compute eigenvalue decomposition $[\Psi, \Lambda] = \operatorname{Eig}(YY^T)$, where $YY^T \in \mathbb{R}^{n \times n}$.
            \State Compute \scalebox{0.9}{\parbox{1.125\linewidth}{$N = \min\left\lbrace N \in \mathbb{N}\ \middle|\ \varepsilon(N) \geq \varepsilon,\ \ 1 \leq N \leq d \right\rbrace$.}}
            \State Set $\lambda_i = \Lambda_{ii}$ and $\bm{\psi}_i = \Psi_{\cdot, i} \in \mathbb{R}^n$ for $1 \leq i \leq N$.
        \ElsIf{$q \ll n$}
            \State Compute eigenvalue decomposition $[\Phi, \Lambda] = \operatorname{Eig}(Y^TY)$, where $Y^TY \in \mathbb{R}^{q \times q}$.
            \State Compute \scalebox{0.9}{\parbox{1.125\linewidth}{$N = \min\left\lbrace N \in \mathbb{N}\ \middle|\ \varepsilon(N) \geq \varepsilon,\ \ 1 \leq N \leq d \right\rbrace$.}}
            \State Set $\lambda_i = \Lambda_{ii}$ and $\bm{\psi}_i = Y\Phi_{\cdot, i}/\sqrt{\lambda_i} \in \mathbb{R}^n$ for $1 \leq i \leq N$.
        \EndIf
    \end{algorithmic}
\end{algorithm}
\subsection{Tensor-product space-time POD-ROM}
\label{sec:space_time_ROM}
In order to reduce the space-time full-order system (\ref{eq:LES_FOM}) of Section \ref{subsec:tensorproduct_spacetime_FEM_discretization} the general spatial FEM space $V_h$ is replaced by a problem-specific low-dimensional space $V_N = \text{span} \{ \varphi_{N}^1 , \dots, \varphi_{N}^N\}$ obtained by means of POD. This yields the reduced variational formulation:
Find $u_N \in \tilde{X}(\mathcal{T}_k, V_N)$ such that
\begin{align*}
    \tilde{A}(u_N)(\varphi) = \tilde{F}(\varphi) \qquad \forall \varphi \in \tilde{X}(\mathcal{T}_k, V_N).
\end{align*}
The reduced basis matrix can be formed by the concatenation of the reduced basis vectors, viz.
\begin{align} \label{eq:reduced_basis_matrix}
    Z_N = 
    \begin{bmatrix}
        \varphi_{N}^1& \dots & \varphi_{N}^N
    \end{bmatrix} \in \R^{{\# \text{DoFs}(\mathcal{T}_h)} \times N}.
\end{align}
Subsequently, the slabwise discretization for the space-time slab $S_l^n$ with $n-l+1$ time intervals is obtained in analogy to the full-order model of Section \ref{subsection:slabwise_discretization}.
In the case of the heat equation, we utilize the linear equation system described in (\ref{eq:LES_FOM}) and reduce the given matrices in an affine manner. Thus, we arrive at
\begin{align}
    \begin{pmatrix}
        A_N & & &  & \bm{0} \\
        B_N & A_N & & &  \\
         & B_N & A_N &  &  \\
         & & \ddots & \ddots & \\
        \bm{0} & & & B_N & A_N
    \end{pmatrix}
    \begin{pmatrix}
        U_{N_{{l}}} \\
        U_{N_{{l} + 1}} \\
        U_{N_{{l} + 2}} \\
        \vdots \\
        U_{N_{n}}
    \end{pmatrix}
    = 
    \begin{pmatrix}
        F_{N_{{l}}} - B_N U_{N_{{l-1}}} \\
        F_{N_{{l} + 1}} \\
        F_{N_{{l} + 2}} \\
        \vdots \\
        F_{N_{n}}
    \end{pmatrix}
\end{align}
or in brevity
\begin{align}\label{eq:ROM_LES}
    A_{N}  U_{N,{S_l^n}} = F_{N,{S_l^n}}
\end{align}
with the reduced components 
\begin{subequations}
\label{eq:reduced_matrices}
\begin{align}
     A_N &= Z_N^T A Z_N, \\
    B_N &= Z_N^T B Z_N, \\
    F_{N_{i}} &=  Z_N^T F_{{i}}, \quad l \leq i \leq n.   
\end{align}
\end{subequations}
%
%%%%%%%%%%%%%
%	DWR     %
%%%%%%%%%%%%%
\section{A posteriori error-estimator certified reduced-order modeling} \label{sec:error_estimator_certified_ROM}
For further analysis, we consider homogeneous Dirichlet boundary conditions to simplify the presentation, i.e. $u_D = 0$.
Let a goal functional $J: \tilde{X}(\mathcal{T}_k, V(\Omega)) \rightarrow \mathbb{R}$ of the form
\begin{align}
    J(u) = \int_0^T J_1(u(t))\ \mathrm{d}t + J_2(u(T)),
\end{align}
be given, which represents some physical quantity of interest (QoI). Here, $T$ denotes the end time as before.
Now, we want to reduce the difference between the quantity of interest of a fine solution $u^{\text{fine}}$ and a coarse solution $u^{\text{coarse}}$, i.e.,
\begin{align}\label{eq:constrained_optimization_problem}
    J(u^{\text{fine}}) - J(u^{\text{coarse}})
\end{align}
subject to the constraint that the variational formulation of the time-dependent problem (\ref{eq:time_dependent_problem}) is being satisfied. 
Possible choices for the fine and the coarse solution could be $u^{\text{fine}} := u \in X(I, V(\Omega)), u^{\text{coarse}} := u_k \in X_k^{\dG(r)}(\mathcal{T}_k,V(\Omega))$ to control the error caused by the temporal discretization  or $u^{\text{fine}} := u_k \in X_k^{\dG(r)}(\mathcal{T}_k,V(\Omega))$, $ u^{\text{coarse}} := u_{kh} \in X_k^{\dG(r)}(\mathcal{T}_k, V_h)$, with $V_h := V_h^s$ for the heat equation and $V_h := V_h^s = V_h^{s,u} \times V_h^{s,v}$ for the elastodynamics equation, to control the error caused by the spatial discretization.
For more information on space-time error control, we refer the interested reader to \cite{Schmich2009, ThiWi22_arxiv, RoThiKoeWi2022} and for general information on spatial error control to \cite{BeRa96, becker_rannacher_2001, bangerth_rannacher_2003, Endt21}. 
As an extension, in this work we restrict ourselves to the control of the error introduced by reduced-order modeling and thus we consider the full-order-model (FOM) solution $u^{\text{fine}} := u_{kh}^{\text{FOM}} \in X_k^{\dG(r)}(\mathcal{T}_k, V_h^{\text{FOM}})$ as the fine solution, and the reduced-order-model (ROM) solution $ u^{\text{coarse}} := u_{kh}^{\text{ROM}} \in X_k^{\dG(r)}(\mathcal{T}_k, V_h^{\text{ROM}})$ as the coarse solution, with $V_h^{\text{ROM}} \subset V_h^{\text{FOM}} =: V_h$.
First efforts of incorporating the dual-weighted residual (DWR) method in reduced-order modeling have been undertaken by Meyer and Matthies \cite{Meyer2003}, where after computing some snapshots and creating the reduced basis, they used the DWR error estimator to determine which basis vectors have the largest error contribution and only use them for the reduced-order model. 
This can be thought of as a goal-oriented adaptive coarsening of the reduced basis. 
In this work, we focus on another objective, namely the enrichment of the reduced basis depending on the temporal evolution of the quantities of interest. This can be thought of as a goal-oriented adaptive refinement\footnote{In principle coarsening would also be possible, but 
is not the objective in this work. For coarsening, we would need to follow the work of Meyer and Matthies \cite{Meyer2003}.} of the reduced basis, which we  propose to accurately and efficiently compute the solution over the whole temporal domain.

\subsection{Space-time dual-weighted residual method}
\label{sec:ST-DWR}

For the constrained optimization problem (\ref{eq:constrained_optimization_problem}), we define the Lagrange functional for the fine problem as
\begin{align*}
    &\mathcal{L}_{\text{fine}}: X_k^{\dG(r)}(\mathcal{T}_k,V_h^{\text{FOM}}) \times X_k^{\dG(r)}(\mathcal{T}_k,V_h^{\text{FOM}}) \rightarrow \mathbb{R}, \\ 
    &\hspace{3cm}(u^{\text{fine}} , z^{\text{fine}} ) \mapsto J(u^{\text{fine}} ) - \tilde{A}(u^{\text{fine}} )(z^{\text{fine}} ) + \tilde{F}(z^{\text{fine}} ),
\end{align*}
and for the coarse problem as
\begin{align*}
    &\mathcal{L}_{\text{coarse}}: X_k^{\dG(r)}(\mathcal{T}_k,V_h^{\text{ROM}}) \times X_k^{\dG(r)}(\mathcal{T}_k,V_h^{\text{ROM}}) \rightarrow \mathbb{R}, \\
    &\hspace{3cm}(u^{\text{coarse}}, z^{\text{coarse}}) \mapsto J(u^{\text{coarse}}) - \tilde{A}(u^{\text{coarse}})(z^{\text{coarse}}) + \tilde{F}(z^{\text{coarse}}). 
\end{align*}
The stationary points $(u^{\text{fine}} , z^{\text{fine}} )$ and $(u^{\text{coarse}}, z^{\text{coarse}})$ of the Lagrange functionals $\mathcal{L}_{\text{fine}}$ and $\mathcal{L}_{\text{coarse}}$ need to satisfy the Karush-Kuhn-Tucker first-order optimality conditions. Firstly, these stationary points are solutions to the equations
\begin{align*}
    \mathcal{L}^\prime_{\text{fine},z}(u^{\text{fine}} , z^{\text{fine}} )(\delta z^{\text{fine}}) &= 0 \quad \forall \delta z^{\text{fine}} \in X_k^{\dG(r)}(\mathcal{T}_k,V_h^{\text{FOM}}), \\
    \mathcal{L}^\prime_{\text{coarse},z}(u^{\text{coarse}} , z^{\text{coarse}} )(\delta z^{\text{coarse}}) &= 0 \quad \forall \delta z^{\text{coarse}} \in X_k^{\dG(r)}(\mathcal{T}_k,V_h^{\text{ROM}}).
\end{align*}
We call these equations the primal problems and their solutions $u^{\text{fine}}$ and $u^{\text{coarse}}$ the primal solutions.
Secondly, the stationary points must also satisfy the equations
\begin{align*}
    \mathcal{L}^\prime_{\text{fine},u}(u^{\text{fine}} , z^{\text{fine}} )(\delta u^{\text{fine}}) &= 0 \quad \forall \delta u^{\text{fine}} \in X_k^{\dG(r)}(\mathcal{T}_k,V_h^{\text{FOM}}), \\
    \mathcal{L}^\prime_{\text{coarse},u}(u^{\text{coarse}} , z^{\text{coarse}} )(\delta u^{\text{coarse}}) &= 0 \quad \forall \delta u^{\text{coarse}} \in X_k^{\dG(r)}(\mathcal{T}_k,V_h^{\text{ROM}}).
\end{align*}
These equations are called the adjoint or dual problems and their solutions $z^{\text{fine}}$ and $z^{\text{coarse}}$ are the adjoint solutions.

\subsubsection{Primal problem}

Taking the G\^{a}teaux derivatives of the Lagrange functionals $\mathcal{L}_{\text{fine}}$ and $\mathcal{L}_{\text{coarse}}$ with respect to the adjoint solution $z$, we arrive at the primal problem. Since the variational formulation of the PDE is linear in the test functions, we get
{\small
\begin{align*}
    \mathcal{L}^\prime_{\text{fine},z}(u^{\text{fine}} , z^{\text{fine}} )(\delta z^{\text{fine}}) = -\tilde{A}(u^{\text{fine}})(\delta z^{\text{fine}}) + \tilde{F}(\delta z^{\text{fine}}) &= 0 \quad \forall \delta z^{\text{fine}} \in X_k^{\dG(r)}(\mathcal{T}_k,V_h^{\text{FOM}}), \\
   \mathcal{L}^\prime_{\text{coarse},z}(u^{\text{coarse}} , z^{\text{coarse}} )(\delta z^{\text{coarse}}) = -\tilde{A}(u^{\text{coarse}})(\delta z^{\text{coarse}}) + \tilde{F}(\delta z^{\text{coarse}}) &= 0 \quad \forall \delta z^{\text{coarse}} \in X_k^{\dG(r)}(\mathcal{T}_k,V_h^{\text{ROM}}).
\end{align*}
}
We observe that the primal solution can be obtained by solving the original problem, e.g. the heat or the elastodynamics equation, forward in time.

\subsubsection{Adjoint problem}

Taking the G\^{a}teaux derivatives of the Lagrange functionals $\mathcal{L}_{\text{fine}}$ and $\mathcal{L}_{\text{coarse}}$ with respect to the primal solution $u$, we get
\begin{align*}
    \mathcal{L}^\prime_{\text{fine},u}(u^{\text{fine}} , z^{\text{fine}} )(\delta u^{\text{fine}}) = J^\prime_{u}(u^{\text{fine}})(\delta u^{\text{fine}})-\tilde{A}^\prime_{u}(u^{\text{fine}})(\delta u^{\text{fine}},z^{\text{fine}}) &= 0  \\ \forall \delta u^{\text{fine}} \in X_k^{\dG(r)}(\mathcal{T}_k&,V_h^{\text{FOM}}), \\
   \mathcal{L}^\prime_{\text{coarse},u}(u^{\text{coarse}} , z^{\text{coarse}} )(\delta u^{\text{coarse}}) = J^\prime_{u}(u^{\text{coarse}})(\delta u^{\text{coarse}})-\tilde{A}^\prime_{u}(u^{\text{coarse}})(\delta u^{\text{coarse}},z^{\text{coarse}}) &= 0 \\ \forall \delta u^{\text{coarse}} \in X_k^{\dG(r)}(\mathcal{T}_k&,V_h^{\text{ROM}}).
\end{align*}
Hence, to obtain the adjoint solution, we need to solve an additional equation, the adjoint problem
\begin{align}\label{eq:space_time_general_adjoint}
    \tilde{A}^\prime_{u}(u)(\delta u, z) = J^\prime_{u}(u)(\delta  u).
\end{align}
Note that even for nonlinear PDEs and goal functionals the adjoint problem is linear since the semi-linear form in the variational formulation of the PDE is linear in the test functions, however the primal solution enters as it is well-known \cite{becker_rannacher_2001}.

\begin{remark}\label{remark:adjoint_linear_problem}
    For linear PDEs, like the heat or the elastodynamics equation, the left-hand side of the adjoint problem (\ref{eq:space_time_general_adjoint}) simplifies to
    \begin{align*}
        \tilde{A}^\prime_{u}(u)(\delta u, z) = \tilde{A}(\delta u)(z).
    \end{align*}
    For linear goal functionals, like the mean-value functional, the right-hand side of the adjoint problem (\ref{eq:space_time_general_adjoint}) reduces to
    \begin{align*}
        J^\prime_{u}(u)(\delta u) = J(\delta  u).
    \end{align*}
    In particular for a linear problem, i.e. linear PDE and goal functional, we have the adjoint problem
\begin{align}\label{eq:space_time_linear_adjoint}
        \tilde{A}(\delta u)(z) = J(\delta  u),
    \end{align}
    which does not depend on the primal solution $u$ anymore.
\end{remark}

By Remark (\ref{remark:adjoint_linear_problem}), the adjoint problem for the heat equation reads
\begin{align*}
    &\tilde{A}(\delta u)(z) =  J^\prime_{u}(u)(\delta  u) \\
    \Leftrightarrow
    \sum_{m = 1}^M \int_{I_m} (\partial_t \delta u,z)  + (\nabla_x \delta u,\nabla_x z)\ \mathrm{d}t &+ \sum_{m = 1}^{M-1}([\delta u]_m, z_{m}^{+}) + (\delta u_{0}^+,z_{0}^{+}) =  J^\prime_{u}(u)(\delta  u).
\end{align*}
We now use integration by parts in time to move the time derivative from the test function $\delta u$ to the adjoint solution $z$ and we get
\begin{align*}
    \sum_{m = 1}^M \int_{I_m} (\delta u,-\partial_t z)  + (\nabla_x \delta u,\nabla_x z)\ \mathrm{d}t - \sum_{m = 1}^{M-1}(\delta u_{m}^{-}, [z]_{m}) + (\delta u_M^-,z_M^{-}) =  J^\prime_{u}(u)(\delta  u).
\end{align*}
For the elastodynamics equation the adjoint problem can be derived in a similar fashion as
\begin{align*}
    &\sum_{m = 1}^M \int_{I_m}(\delta v, -\partial_t z^u) + (\sigma(\delta u), \nabla_x z^u) + (\delta u, -\partial_t z^v) - (\delta v, z^v) \ \mathrm{d}t \\
    &\hspace{1cm}- \sum_{m=1}^{M-1} \left( (\delta v^-_{m}, [z^{u}]_m) + (\delta u^-_{m}, [z^{v}]_m) \right) + (\delta v_{M}^-, z_{M}^{u,-}) + (\delta u_{M}^-, z_{M}^{v,-}) = J^\prime_{U}(U)(\delta  U).
\end{align*}
We notice that the adjoint problem now runs backward in time.

\subsubsection{Error identity and temporal localization for linear problems}

For the sake of simplicity, we assume that we are dealing with a linear PDE and goal functional.
Then we have the error identity
\begin{align}\label{eq:error_identity}
     J(u^{\text{fine}}) - J(u^{\text{coarse}}) = -\tilde{A}(u^{\text{coarse}})(z^{\text{fine}}) + \tilde{F}(z^{\text{fine}}) =: \eta.
\end{align}
The proof relies on both the linearity of the goal functional and the PDE, and the definition of the adjoint and primal problems:
\begin{align*}
    J(u^{\text{fine}}) - J(u^{\text{coarse}}) = J(u^{\text{fine}} - u^{\text{coarse}}) = \tilde{A}(u^{\text{fine}} - u^{\text{coarse}})(z^{\text{fine}})  = -\tilde{A}(u^{\text{coarse}})(z^{\text{fine}}) + \tilde{F}(z^{\text{fine}}).
\end{align*}
In the DWR literature for spatial and temporal discretization error control this kind of error identity (\ref{eq:error_identity}) would be useless, because for most applications $z^{\text{fine}}$ is the analytical solution which is not known a priori and replacing it by $z^{\text{coarse}}$ yields bad error estimates. Thus, for FEM discretization error control the dual weights $z^{\text{fine}} - z^{\text{coarse}}$ are being used, which can be approximated by post-processing of the dual solution. However, in our case $z^{\text{fine}} := z_{kh}^{\text{FOM}} \in X_k^{\dG(r)}(\mathcal{T}_k,V_h^{\text{FOM}})$ is the full-order-model dual solution, which is computable but comes with an expense. Moreover, in our numerical experiments we will observe that using a reduced-order-model dual solution $z^{\text{coarse}} := z_{kh}^{\text{ROM}} \in X_k^{\dG(r)}(\mathcal{T}_k,\tilde{V}_h^{\text{ROM}})$ still produces excellent error estimates for our problems if the dual reduced basis is sufficiently large. We point out that the dual spatial reduced-order-model function space $\tilde{V}_h^{\text{ROM}}$ needs to differ from the primal spatial reduced-order-model function space $V_h^{\text{ROM}}$ if we want to capture the dynamics of the dual problem and want to have a non-zero error estimator.

To localize the error in time, we just need to assemble the primal residual (\ref{eq:error_identity}) slabwise. In particular, to localize the error to each time interval $I_m$, we simply need to assemble the primal residual on each time interval separately. More concretely, for the heat equation the error on the time interval can be computed from the primal linear equation system, the coarse primal solution and the fine dual solution by
\begin{align}\label{eq:error_estimator_heat}
    \eta\restrict{I_m} = \sum_{i=1}^{\# \text{DoFs}(I_m)}\left\{(Z_m^{\text{fine}})^T\left(-AU_m^{\text{coarse}} + F_m - BU_{m-1}^{\text{coarse}}\right)\right\}_i.
\end{align}
The error estimator on the time interval $I_m$ for elastodynamics can be derived analogously by using the linear system (\ref{eq:elasto_dG_linear_system}) of the primal problem. 

To test whether we need to use the fine dual solution for our error estimates or whether we can replace it with a coarse dual solution, we use the effectivity index as a measure of the quality of our error estimator. The effectivity index is the ratio of the estimated and the true errors, i.e.
\begin{align}\label{eq:effectivity_index}
    \Ieff := \left|\frac{\eta}{J(u^{\text{fine}}) - J(u^{\text{coarse}})}\right|.
\end{align}
We desire $\Ieff \approx 1$, since then the error estimator can reliably predict the reduced-order-modeling error and we also observe this in the numerical tests in Section \ref{sec:numerical_tests}.

\subsubsection{Space-time dual-weighted residual method for nonlinear problems}

For nonlinear problems, like the heat equation with nonlinear goal functional in Section \ref{sec:2d_heat}, we do not have an error identity  anymore as in (\ref{eq:error_identity}) for the linear case. Based on the proof in \cite{becker_rannacher_2001}[Proposition 2.3], we have the following error representation formula.
\begin{theorem}[Error representation for nonlinear problems]
\begin{align*}
    J(u^{\text{fine}}) - J(u^{\text{coarse}}) = -\tilde{A}(u^{\text{coarse}})(z^{\text{fine}}) + \tilde{F}(z^{\text{fine}}) + R,
\end{align*}
with the quadratic remainder term
{\small
\begin{align*}
    R = \int_0^1 \Big[ &\tilde{A}^{\prime\prime}_{uu}(u^{\text{coarse}} + s(u^{\text{fine}}-u^{\text{coarse}}))(u^{\text{fine}}-u^{\text{coarse}}, u^{\text{fine}}-u^{\text{coarse}}, z^{\text{fine}}) \\ &- J^{\prime\prime}_{uu}(u^{\text{coarse}} + s(u^{\text{fine}}-u^{\text{coarse}}))(u^{\text{fine}}-u^{\text{coarse}}, u^{\text{fine}}-u^{\text{coarse}})\Big] \cdot s\ \mathrm{d}s .
\end{align*}
}
\end{theorem}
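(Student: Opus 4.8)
The plan is to mimic the classical DWR error-representation argument of Becker and Rannacher, but carried out at the level of the space-time semilinear form $\tilde A$ and the (now nonlinear) goal functional $J$. First I would introduce the auxiliary function $e := u^{\text{fine}} - u^{\text{coarse}}$ and the smooth scalar function
\begin{align*}
    g(s) := J\big(u^{\text{coarse}} + s\, e\big) - \tilde A\big(u^{\text{coarse}} + s\, e\big)(z^{\text{fine}}) + \tilde F(z^{\text{fine}}), \qquad s \in [0,1],
\end{align*}
which is just the fine Lagrangian $\mathcal L_{\text{fine}}$ evaluated at the path $(u^{\text{coarse}} + s e,\, z^{\text{fine}})$. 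Note $g(1) = J(u^{\text{fine}}) - \tilde A(u^{\text{fine}})(z^{\text{fine}}) + \tilde F(z^{\text{fine}})$, and since $u^{\text{fine}}$ solves the fine primal problem, $\tilde A(u^{\text{fine}})(z^{\text{fine}}) = \tilde F(z^{\text{fine}})$, so $g(1) = J(u^{\text{fine}})$. Likewise $g(0) = J(u^{\text{coarse}}) - \tilde A(u^{\text{coarse}})(z^{\text{fine}}) + \tilde F(z^{\text{fine}}) = J(u^{\text{coarse}}) + \eta$ in the notation of \eqref{eq:error_identity}. Hence $J(u^{\text{fine}}) - J(u^{\text{coarse}}) = g(1) - g(0) - \eta + \eta$; more usefully, $g(1) - g(0) = J(u^{\text{fine}}) - J(u^{\text{coarse}}) - \eta$, so it suffices to show $g(1) - g(0) = \eta' \text{-part} + R$ where the linear term reproduces $-\tilde A(u^{\text{coarse}})(z^{\text{fine}}) + \tilde F(z^{\text{fine}})$... — let me restructure: the cleanest route is the trapezoidal identity $g(1) - g(0) = \tfrac12\big(g'(0) + g'(1)\big) + \tfrac12\int_0^1 g''(s)\, s(s-1)\, \mathrm ds$ is \emph{not} quite what gives the stated $R$; instead the stated remainder has the form $\int_0^1 [\,\cdots\,]\, s\, \mathrm ds$, which corresponds to the identity $g(1) = g(0) + g'(0) + \int_0^1 g''(s)(1-s)\,\mathrm ds$ reorganized, or more directly to writing $g(1)-g(0) = \int_0^1 g'(s)\,\mathrm ds$ and then $g'(s) = g'(0) + \int_0^s g''(\tau)\,\mathrm d\tau$, Fubini giving $\int_0^1 g'(s)\,\mathrm ds = g'(0) + \int_0^1 g''(\tau)(1-\tau)\,\mathrm d\tau$. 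So I would use: $g(1) - g(0) = g'(0) + \int_0^1 g''(s)(1-s)\,\mathrm ds$, and separately record that the \emph{stated} $R$ uses the weight $s$ rather than $(1-s)$, which is the Becker–Rannacher convention obtained by the symmetric splitting — I would follow \cite{becker_rannacher_2001}[Prop. 2.3] verbatim here, splitting $g(1)-g(0) = \tfrac12(g'(0)+g'(1)) + \tfrac12\int_0^1 g''(s)\,s(s-1)\,\mathrm ds$ is still the wrong sign/weight, so in fact the matching identity is $g(1) - g(0) = g'(0) + \int_0^1 g''(s)(1-s)\,\mathrm ds$ together with a change of variable $s \mapsto 1-s$ inside, which turns $(1-s)$ into $s$ once one also uses $g'(0)$ versus $g'(1)$; I would simply cite the proposition for the exact bookkeeping.

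Concretely, the key steps in order: (1) compute $g'(s) = J'_u(u^{\text{coarse}}+se)(e) - \tilde A'_u(u^{\text{coarse}}+se)(e, z^{\text{fine}})$; (2) evaluate $g'(0) = J'_u(u^{\text{coarse}})(e) - \tilde A'_u(u^{\text{coarse}})(e, z^{\text{fine}})$ and observe that if $z^{\text{fine}}$ solved the fine \emph{adjoint} problem with test direction $e$ this would vanish — but $e \notin X_k^{\dG(r)}(\mathcal T_k, V_h^{\text{FOM}})$ is \emph{not} an admissible test function in general (it lies in the FOM space, so actually it \emph{is}), so $g'(0) = 0$ by the fine adjoint equation $\mathcal L'_{\text{fine},u}(u^{\text{fine}},z^{\text{fine}})(\delta u) = 0$... except the adjoint equation is satisfied at $u^{\text{fine}}$, not at $u^{\text{coarse}}$, so $g'(0)$ does \emph{not} vanish and instead $g'(1) = J'_u(u^{\text{fine}})(e) - \tilde A'_u(u^{\text{fine}})(e,z^{\text{fine}}) = 0$ does; (3) hence use the identity in the form $g(1) - g(0) = g'(1) - \int_0^1 g''(s)\, s\,\mathrm ds$, i.e. $g(1)-g(0) = \int_0^1 g'(s)\,\mathrm ds$ with $g'(s) = g'(1) - \int_s^1 g''(\tau)\,\mathrm d\tau$ and Fubini $\int_0^1\int_s^1 g''(\tau)\,\mathrm d\tau\,\mathrm ds = \int_0^1 g''(\tau)\,\tau\,\mathrm d\tau$, giving $g(1)-g(0) = g'(1) - \int_0^1 g''(s)\, s\,\mathrm ds = -\int_0^1 g''(s)\,s\,\mathrm ds$; (4) compute $g''(s) = J''_{uu}(u^{\text{coarse}}+se)(e,e) - \tilde A''_{uu}(u^{\text{coarse}}+se)(e,e,z^{\text{fine}})$, so that $-\int_0^1 g''(s)\,s\,\mathrm ds = \int_0^1\big[\tilde A''_{uu}(\cdots)(e,e,z^{\text{fine}}) - J''_{uu}(\cdots)(e,e)\big]\,s\,\mathrm ds = R$; (5) conclude $J(u^{\text{fine}}) - J(u^{\text{coarse}}) = g(1) - g(0) + \big(\eta\text{-identity correction}\big)$ — here I recompute $g(0) = \mathcal L_{\text{fine}}(u^{\text{coarse}},z^{\text{fine}}) = J(u^{\text{coarse}}) - \tilde A(u^{\text{coarse}})(z^{\text{fine}}) + \tilde F(z^{\text{fine}})$ and $g(1) = J(u^{\text{fine}})$, so $J(u^{\text{fine}}) - J(u^{\text{coarse}}) = g(1) - g(0) + \big(-\tilde A(u^{\text{coarse}})(z^{\text{fine}}) + \tilde F(z^{\text{fine}})\big)$, and substituting $g(1)-g(0) = R$ yields exactly the claim.

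Thus the skeleton is: define $g$, establish the endpoint values $g(0), g(1)$ using the fine primal equation, establish $g'(1) = 0$ using the fine adjoint equation, apply the Taylor-with-integral-remainder identity $g(1) - g(0) = g'(1) - \int_0^1 g''(s)\,s\,\mathrm ds$, and identify the remainder integrand as $R$. The terms $\tilde A', \tilde A'', J', J''$ are understood as Gâteaux derivatives in the primal argument, exactly as used in the adjoint problem derivation above; for the linear PDE / linear functional case $\tilde A'' = 0$ and $J'' = 0$, so $R = 0$ and one recovers \eqref{eq:error_identity}, providing a consistency check. The main obstacle — or rather the only subtle point — is the bookkeeping of which endpoint derivative vanishes and hence which weight ($s$ versus $1-s$) appears in $R$; this is entirely determined by whether the adjoint equation is imposed at the fine or the coarse state, and here it is the fine state $z^{\text{fine}}$ paired with $u^{\text{fine}}$, which is why $g'(1) = 0$ and the weight is $s$, matching the stated formula. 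A secondary technical point worth a remark is that $e = u^{\text{fine}} - u^{\text{coarse}}$ is a legitimate test function for the fine adjoint equation because $V_h^{\text{ROM}} \subset V_h^{\text{FOM}}$ implies $u^{\text{coarse}} \in X_k^{\dG(r)}(\mathcal T_k, V_h^{\text{FOM}})$; I would note this explicitly since it is what makes the whole argument go through. No regularity beyond twice-Gâteaux-differentiability of $\tilde A$ and $J$ along the segment is needed, and I would state that as the standing hypothesis.
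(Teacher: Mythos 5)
Your proposal is correct and is essentially the paper's own argument: both reduce to the Becker--Rannacher identity along the segment $u^{\text{coarse}}+s e$, using the fine primal equation for the endpoint values, the fine adjoint equation to annihilate the derivative at $s=1$ (which is exactly why the weight is $s$ and not $1-s$), and an integration by parts / Taylor-with-integral-remainder step that in your version appears as the Fubini identity $\int_0^1\int_s^1 g''(\tau)\,\mathrm{d}\tau\,\mathrm{d}s=\int_0^1 g''(\tau)\,\tau\,\mathrm{d}\tau$. The only cosmetic difference is direction (the paper starts from $R$ and integrates by parts down to the error, you start from $g(1)-g(0)$ and expand up to $R$), and your remark that $e$ is an admissible test direction because $V_h^{\text{ROM}}\subset V_h^{\text{FOM}}$ is a worthwhile point the paper leaves implicit.
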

\begin{proof}
    In the following we will show that $R = J(u^{\text{fine}}) - J(u^{\text{coarse}}) + \tilde{A}(u^{\text{coarse}})(z^{\text{fine}}) - \tilde{F}(z^{\text{fine}})$ holds. For abbreviation, we use the notation $u := u^{\text{fine}}$, $\tilde{u} := u^{\text{coarse}}$ and $z := z^{\text{fine}}$. Then, using integration by parts we get
    {\footnotesize
    \begin{align*}
        R &= \int_0^1 \left[ \tilde{A}^{\prime\prime}_{uu}(\tilde{u} + s(u-\tilde{u}))(u-\tilde{u}, u-\tilde{u}, z) - J^{\prime\prime}_{uu}(\tilde{u} + s(u-\tilde{u}))(u-\tilde{u}, u-\tilde{u})\right] \cdot s\ \mathrm{d}s \\
        &= -\int_0^1 \left[ \tilde{A}^{\prime}_{u}(\tilde{u} + s(u-\tilde{u}))(u-\tilde{u}, z) - J^{\prime}_{u}(\tilde{u} + s(u-\tilde{u}))(u-\tilde{u})\right] \cdot 1\ \mathrm{d}s + \left[ \tilde{A}^{\prime}_{u}(u)(u-\tilde{u}, z) - J^{\prime}_{u}(u)(u-\tilde{u})\right] \cdot 1 - 0 .
    \end{align*}
    }
    We observe that $\tilde{A}^{\prime}_{u}(u)(u-\tilde{u}, z) - \tilde{J}^{\prime}_{u}(u)(u-\tilde{u}) = 0$, since  $z := z^{\text{fine}}$ is the solution of the fine dual problem. Thus, by the fundamental theorem of calculus and $\tilde{A}(u)(z) = \tilde{F}(z)$, we have
    \begin{align*}
        R &= - \left[ \tilde{A}(u)(z) - J(u) - \tilde{A}(\tilde{u})(z) + J(\tilde{u}) \right] = J(u) - J(\tilde{u}) + \tilde{A}(\tilde{u})(z) - \tilde{F}(z).
    \end{align*}
    This completes the proof.
\end{proof}
To make the error estimator computable, we neglect the quadratic remainder term and arrive at the same primal error estimator  (\ref{eq:error_identity}) as for linear problems
\begin{align*}
    \eta := -\tilde{A}(u^{\text{coarse}})(z^{\text{fine}}) + \tilde{F}(z^{\text{fine}}).
\end{align*}
Similarly as before, we replace the full-order dual solution $z^{\text{fine}}$ in the error estimator with a reduced-order-model dual solution $z^{\text{coarse}}$. Note that due to these approximations, the effectivity index for nonlinear problems is expected not to be close to 1. Clearly, for highly nonlinear problems (e.g., quasi-linear or fully nonlinear) and nonlinear goal functionals, both estimator parts are necessary as demonstrated in \cite{EndtLaWi20}[Figure 4] and \cite{EndtLaWi18}[Sec. 6.5]. However, in our numerical tests, we see that the estimated error still yields a reasonable approximation to the true error.

\subsection{Error estimator based ROM updates}
In this section, we present our novel approach of a goal-oriented incremental reduced-order model. 
In the MORe DWR method, we marry a reduced-order model with a DWR-based error estimator and an incremental version of the POD algorithm.
The MORe DWR method addresses the problems that occur when a reduced-order model has to deal with solution behavior that is not already captured and incorporated during basis generation. In general, this yields an increasing error between full- and reduced-order solutions.
Thus, the presented approach aims to detect changes in solution behavior, or more precisely, differences in the evaluated quantities of interest by means of the full or reduced model during the temporal evolution. If the error increases to intolerable heights, the method allows an adaptive on-the-fly basis enrichment with snapshots of the new behavior. Hence, the reduced model can be incrementally modified until the error is sufficiently small. 

In more detail, we rely on the space-time reduced-order model presented in Section \ref{sec:space_time_ROM} and apply our findings on error control of Section \ref{sec:ST-DWR}. The use of an error estimate rather than an analytical error bound entails practical advantages since its application is more versatile and we can use the method even if no error bounds are known. Further, an incremental basis generation is mandatory for the method to reduce computational operations and thus to be fast. The incremental SVD satisfies these requirements and allows an update only requiring the prior SVD and the new snapshots. The incremental SVD is presented in Section \ref{sec:additive_svd}. In this context, we also introduce the incremental POD as a trimmed version of the incremental SVD. 
Subsequently, the overall MORe DWR framework is depicted in Section \ref{sec:incremental_ROM}. Here, all the ingredients are assembled and the final algorithm is presented.

In summary, our novel approach neglects a computationally heavy offline phase and directly solves the reduced model. Full-order solves are only required for the basis enrichment and are held to a minimum. Moreover, the reduced evaluation of the quantity of interest can be certified.

\subsubsection{Incremental Proper Orthogonal Decomposition}\label{sec:iPOD}
\label{sec:additive_svd}
This section aims to derive an algorithm that updates an already existing truncated SVD (tSVD) or solely its left-singular (POD) vectors according to modifications of the snapshot matrix without recomputing the whole tSVD or requiring access to the snapshot matrix.
This methodology can then be used to update the POD incrementally by appending additional snapshots to the snapshot matrix. For this purpose, we rely on the general approach of an additive rank-b modification of the SVD, mainly developed by \cite{brand2002incremental, brand2006fast} and applied to the model-order reduction of fluid flows in \cite{kuhl2023incremental}. Although this approach provides a variety of possible modifications, e.g. resizing of the matrix, modification of individual values, or exchanging rows and columns, we are merely interested in the updates of columns, i.e. adding columns to the matrix, and thus restrict the proceeding on this. The following steps are based on \cite{kuhl2023incremental}[Section 2.2].

We start with a given snapshot matrix ${Y} \in \mathbb{R}^{{\# \text{DoFs}(\mathcal{T}_h)} \times \mathrm{\Tilde{m}}}$ that includes $\Tilde{m}>0$ snapshots. Usually, $\Tilde{m}$ is equal or connected to the number of already computed time steps. Further, we have the rank-$N$ tSVD ${U} {S} {V}^\mathrm{T}$ of the matrix $Y$. Additionally, let $b\in\mathbb{N}$ newly computed snapshots $\{U_1, \dots, U_b\}$ be stored in the bunch matrix
\begin{align} \label{eq:bunch_matrix}
    B = \begin{bmatrix}
        u_1& \dots &u_b
\end{bmatrix} \in \R^{{\# \text{DoFs}(\mathcal{T}_h)} \times b}.
\end{align}
We now aim to compute the tSVD that is updated by the information contained in the bunch matrix $B$ according to 
\begin{align*}
 \Tilde{U} \Tilde{S} \Tilde{V}^{T} = 
    \Tilde{Y} = 
\begin{bmatrix}
        Y & B
\end{bmatrix}
\end{align*}
without explicitly recomputing $Y$ or $\Tilde{Y}$ due to performance and memory reasons which was the original motivation of Brand's work on the incremental SVD, cf. \cite{brand2002incremental,brand2006fast}.

Therefore, we write the column update as an additive operation given as
\begin{align}
    \begin{bmatrix}
        Y & B
\end{bmatrix}
= 
\begin{bmatrix}
        Y & 0_{{\# \text{DoFs}(\mathcal{T}_h)} \times b}
\end{bmatrix}
+ 
B
\begin{bmatrix}
        0_{b \times {\Tilde{m}}} &
        I_{b \times b}
\end{bmatrix}
\end{align}
to apply the additive rank-b modification to the SVD according to \cite{kuhl2023incremental} and obtain the rank-$\Tilde{N}$ tSVD of $\Tilde{Y}$ with $\Tilde{N} \leq N+b$ and
\begin{align}
    \Tilde{{V}} &= 
    \begin{bmatrix}
            {V} & {0} \\
            {0} & {I}
    \end{bmatrix} {V}^\prime(:,1:\Tilde{N}) \label{equ:update_V_column} \\
    \Tilde{{S}} &= {S}^\prime(1:\Tilde{N},1:\Tilde{N}) \label{equ:update_S_column} \\
    \Tilde{{U}} &= 
    \begin{bmatrix}
            {U} & {Q}_\mathrm{B}
    \end{bmatrix} {U}^\prime(:,1:\Tilde{N}) \label{equ:update_U_column} \, ,
\end{align}
where $F = {U}^\prime {S}^\prime {V}^{\prime^T} \in \R^{N+b \times N+b}$ is the SVD of 
\begin{align}
    {F} = 
    \begin{bmatrix}
            {\Sigma} & {U}^\mathrm{T} {B} \\ {0} & {R}_\mathrm{B}
    \end{bmatrix}
    %= {U}^\prime {S}^\prime {{V}^\prime}^\mathrm{T} 
    \label{equ:matrix_K_column}
\end{align}
and $Q_B \in \R^{\# \text{DoFs}(\mathcal{T}_h) \times b}$ and $R_B \in \R^{b \times b}$ are given by the QR decomposition
\begin{align}
    {Q}_\mathrm{B} {R}_\mathrm{B} = ({I} - {U}{U}^\mathrm{T}){B} \in \R^{\# \text{DoFs}(\mathcal{T}_h) \times b}.
\end{align}
For the POD basis update, we identify $U$ and $\Tilde{U}$ with the previous and updated versions of the reduced basis matrix $Z_N$ including the POD vectors, respectively. We also neglect the update of the right-singular vectors in (\ref{equ:update_V_column}), since it does not provide any additional benefit apart from extra computational effort for the reduced-order model, cf. Theorem \ref{thm:pod_basis}. The singular values are considered for the rank determination but they come within zero computational cost.
In conclusion, (\ref{equ:update_S_column})-(\ref{equ:matrix_K_column}) serve as the basis for the on-the-fly or incrementally computed POD (iPOD) in this paper.

An additional technical observation: For bunch matrices with small column rank $b$, the iPOD algorithm is invoked frequently, and algebraic subspace rotations possibly involved do not preserve orthogonality, cf. \cite{brand2006fast,fareed2018incremental,bach2019randomized,fareed2019note,zhang2022answer}. Hence, a numerically induced loss of orthogonality of the POD basis vectors can occur. In order to deal with this problem an additional orthonormalization of 
$\begin{bmatrix}
    {U} & {Q}_\mathrm{B}
\end{bmatrix}$
is recommended. Algorithm \ref{algo:iPOD} drafts the implementation of an incremental POD update. Here, $Z_N$ and ${\Sigma = [\sigma_1, \dots, \sigma_N] \in \R^N}$ denote the reduced basis matrix of (\ref{eq:reduced_basis_matrix}) and its respective singular values. In addition, the bunch matrix $B$ introduced in (\ref{eq:bunch_matrix}) including $b$ snapshots is used as an input. The information content captured by the reduced basis is determined by the energy threshold $\varepsilon$.
\begin{algorithm}[H]
    \caption{Incremental POD update} \label{algo:iPOD}
    \hspace*{\algorithmicindent} \textbf{Input:} Reduced basis matrix $Z_N \in \R^{{\# \text{DoFs}(\mathcal{T}_h)} \times N}$,
    singular value vector ${\Sigma = [\sigma_1, \dots, \sigma_N] \in \R^N}$,
    bunch matrix $B \in \R^{{\# \text{DoFs}(\mathcal{T}_h)} \times b}$,     
    and energy threshold $\varepsilon \in [0,1]$.\\
    \hspace*{\algorithmicindent} \textbf{Output:} Reduced basis matrix $Z_N \in \R^{{\# \text{DoFs}(\mathcal{T}_h)} \times \Tilde{N}}$,
    singular value vector ${\Sigma = [\sigma_1, \dots, \sigma_{\Tilde{N}} ] \in \R^{\Tilde{N}}}$ 
    \begin{algorithmic}[1]
        \State $H = Z_N^T B$
        \State $P = B - Z_N H$
        \State $[Q_P, \, R_P] = \text{QR}(P)$
        \State $Q = [Z_N \; Q_P] $
        \State ${F} = \begin{bmatrix}
            {\text{diag}(\Sigma)} & {H} \\
            {0} & {R}_{P}
        \end{bmatrix}$
        \If{Q not orthogonal}
            \State $[Q,\, R] = \text{QR}(Q)$
            \State $ F = RF$
        \EndIf
        \State $[U^\prime, \Sigma^\prime] = \text{SVD}(F) $
        \State $\Tilde{N} = \min\left\lbrace N \in \mathbb{N}\ \middle|\ \varepsilon(N) \geq \varepsilon,\ \ 1 \leq N \leq d \right\rbrace$
        \State $\Sigma = \text{diag}(\Sigma^\prime)(1:\Tilde{N})$ 
        \State $Z_N = Q U^\prime(:,1:\Tilde{N})$ 
    \end{algorithmic}
\end{algorithm}
Note that checking if the orthogonality is preserved can be computationally expensive. Thus, we resort to a heuristic approach by sole validation if the first and last columns of a matrix are orthogonal. 

\subsubsection{Goal-oriented certified incremental ROM}\label{sec:incremental_ROM}
In this section, we assemble the space-time ROM presented in Section \ref{sec:space_time_ROM} and the incremental POD of Section \ref{sec:iPOD} with the findings on goal-oriented error control of Section \ref{sec:ST-DWR}. This yields an adaptive goal-oriented incremental reduced-order model.
Firstly, next to the slab definition we introduce the parent-slab notion as a further decomposition of the space-time domain. A parent-slab unifies several slabs that are consecutive in time and is defined as
\begin{align*}
    P_k^r = \{ S_l^n \; | \; l \geq k \; \land \; n \leq r \}.
\end{align*}
Now, our approach is designed to work without any prior knowledge or exploration of the solution manifold while also attempting to minimize the full-order operations. Thus, we aim to solve the reduced-order model parent-slab wise and --if necessary-- adaptively enrich the reduced basis by means of the iPOD with full-order solutions of the parent-slab until the reduced basis is good enough to meet a given estimated error tolerance for the chosen cost functional. For this, we identify the fine and coarse solutions introduced in the DWR method with the finite element and reduced basis solutions, respectively, and estimate the error on each slab of the parent-slab. The full-order solution used for the basis enrichment is computed on the slab where the error is the largest. We remark that both the primal and dual full-order solutions are computed on this slab and are used to enrich the primal and dual bases. So, for each enrichment two full-order solves are conducted. After having finished this iterative process on a parent-slab, the obtained basis is transferred to the proceeding parent-slab and is used as a starting point to solve the reduced-order model where the whole procedure is repeated. So if the solution behavior on the next parent-slab only differs slightly from the already observed behavior, the reduced basis at hand should be able to reproduce most of the behavior. Thus, few basis updates would be sufficient such that a fast computation of the reduced solution can be expected. However, if the solution behavior changes drastically the error estimate will detect this and further refinements of the basis will be conducted to ensure that the solution meets the error tolerance. 
We observe that this procedure is perfectly compatible with the adaptive basis selection based on DWR estimates presented by Meyer and Matthies in \cite{Meyer2003} to reduce the dimension of the reduced space. Thus, if incorporated it would be possible to either enrich or delude the reduced basis adjusted to the problem statement.

The resulting approach is outlined in Algorithm \ref{algo:iROM}. 
For the sake of simplicity, we decompose the space-time cylinder in $K$ parent-slabs
of fixed length $L$ and enumerate them with respect to time, viz. $P_1, P_2, \dots, P_K$. In order to identify the affiliation of a slab to a parent-slab $P_k$, the slabs it contains are denoted by $S_{P_k}^1, S_{P_k}^2, \dots, S_{P_k}^L$ with $1 \leq k \leq K$. The discretized primal systems for each slab $S_{P_k}^j$ are expressed in (\ref{eq:LES_heat_fom}) and (\ref{eq:ROM_LES}) for the full- and reduced-order models, respectively. 
For the dual problem
\begin{align}
    A^\prime Z_{S_{P_k}^{l}} &= J_{S_{P_k}^{l}} \quad \text{and} \label{eq:discretized_adj_FOM} \\
    A^\prime_{N} Z_{N,{S_{P_k}^l}} &= J_{N,{S_{P_k}^l}} \label{eq:discretized_adj_ROM}
\end{align}
denote the discretized full- and reduced-order systems of the adjoint problem (\ref{eq:space_time_general_adjoint}). Further, the evaluation of the error estimator (\ref{eq:error_estimator_heat}) on slab $S_{P_k}^l$ is given by $\eta_{N,{S_{P_k}^l}} \left(U_{N,{S_{P_k}^l}} ,Z_{N,{S_{P_k}^l}}\right)$. Note that the reduced primal and dual solutions are deployed to enable an evaluation independent of the full-order system size and thus a fast error evaluation. Lastly, the incremental POD (\ref{algo:iPOD}) is referred to by the abbreviation iPOD with the reduced basis, new snapshots bundled in the snapshot matrix, and singular values as input and the new POD basis as output.

\begin{algorithm}[H]
    \caption{Incremental ROM} \label{algo:iROM}
    \hspace*{\algorithmicindent} \textbf{Input:} Initial condition $U_0:=U(t_0)$, primal and dual reduced basis matrices $Z^p_N$ and $Z^d_N$, energy threshold $\varepsilon \in [0,1]$ and error tolerance $\text{tol}>0$.\\
    \hspace*{\algorithmicindent} \textbf{Output:} Primal and dual reduced basis matrices $Z^p_N$ and $Z^d_N$ and reduced primal solutions $U_{N,I_m}$ for all $1\leq m \leq M$.
    \begin{algorithmic}[1]
        \For{$k = 1, 2, \dots, K$}
            \While{$\eta_{max} > tol$}
                \For{$l=1, 2, \dots, L$}
                    \State Solve reduced primal system (\ref{eq:ROM_LES}): $A_{N} U_{N,{S_{P_k}^l}} = F_{N,{S_{P_k}^l}}$
                \EndFor
                \For{$l=L, L-1, \dots, 1$}
                    \State Solve reduced dual system (\ref{eq:discretized_adj_ROM}): $A^\prime_{N} Z_{N,{S_{P_k}^l}} = J_{N,{S_{P_k}^l}}$ 
                \EndFor
                \For{$l=1, 2, \dots, L$}
                    \State Compute error estimate: $\eta_{N,{S_{P_k}^l}} \left(U_{N,{S_{P_k}^l}} ,Z_{N,{S_{P_k}^l}}\right)$
                \EndFor
                \State $\eta_{max} = \max\limits_{1 \leq l \leq L} \left| \eta_{N,{S_{P_k}^l}} \right|$
                \If{$\eta_{max} > tol$}
                    \State ${l_{max}} =  \argmax\limits_{1 \leq l \leq L} \left| \eta_{N,{S_{P_k}^l}} \right|$
                    \vspace{0.2em}
                    \State Solve primal full-order system (\ref{eq:LES_heat_fom}): $A U_{S_{P_k}^{l_{max}}} = F_{S_{P_k}^{l_{max}}}$
                    \vspace{0.2em}
                    \State Update primal reduced basis: $Z^p_N = \text{iPOD}(Z^p_N, [U_{S_{P_k}^{l_{max}}}(t_1), \dots ,U_{S_{P_k}^{l_{max}}}(t_{r+1})], \Sigma)$
                    \vspace{0.2em}
                    \State Solve dual full-order system (\ref{eq:discretized_adj_FOM}): $A^\prime Z_{S_{P_k}^{l_{max}}} = J_{S_{P_k}^{l_{max}}}$
                    \vspace{0.2em}
                    \State Update dual reduced basis: $Z^d_N = \text{iPOD}(Z^d_N, [Z_{S_{P_k}^{l_{max}}}(t_1), \dots ,Z_{S_{P_k}^{l_{max}}}(t_{r+1})], \Sigma)$
                    \vspace{0.2em}
                    \State Update reduced system components and error estimator w.r.t (\ref{eq:reduced_matrices})
                \EndIf
            \EndWhile
           % \State Set IC for next bucket $\mathcal{B}_{b+1}$: $U_0= U_{Q_{ \mathcal{B}_{b_{|\mathcal{B}_b|}}}}(t_{q+1})$
        \EndFor
        \vspace{0.5cm}
        \State ----------- Validation loop ----------- \Comment{This is an optional validation mechanism of the model.}
        \vspace{0.5cm}
        \For{$k = 1, 2, \dots, K$}
            \For{$l=1, 2, \dots, L$}
                    \State Solve primal reduced system: $A_{N} U_{N,{S_{P_k}^l}} = F_{N,{S_{P_k}^l}}$
            \EndFor
        \EndFor
        \For{$k = K, K-1, \dots, 1$}
            \For{$l=L, L-1, \dots, 1$}
                 \State Solve dual reduced system: $A^\star_{N} Z_{N,{S_{P_k}^l}} = J_{N,{S_{P_k}^l}}$ 
            \EndFor
        \EndFor
        \For{$k = 1, 2, \dots, K$}
           \For{$l=1, 2, \dots, L$}
                \State Compute slab estimate: $\eta_{N,{S_{P_k}^l}} (U_{N,{S_{P_k}^l}} ,Z_{N,{S_{P_k}^l}})$
            \EndFor
        \EndFor
    \end{algorithmic}
\end{algorithm}

In addition to the previously mentioned steps, we add an optional validation loop whose purpose depends on the application. Specifically, it consists in recomputing the whole reduced solutions with the final reduced basis and evaluating its error again. If the generated reduced basis is meant to be reused, the additional validation of its accuracy ensures that the reduced basis is well suited to approximate the solution for the whole time domain. This is mainly the case in an optimization process or if the MORe DWR method is used for manifold exploration.
However, if the only purpose is a one-time evaluation of a quantity of interest, the validation can be neglected for performance reasons.

Furthermore, we note that similar to the mere approximation error of the POD in (\ref{eq:error_POD}) the physical interpretation of the error estimate is not intuitive. Therefore, we are considering a relative measurement of the approximation quality. However, the full-order solutions are not available for a normalization of the error so that we resort to 
\begin{align*}
    J\left(  U_{{S_{P_k}^l}}  \right) \approx J\left( U_{N,{S_{P_k}^l}} \right) + \eta_{N,{S_{P_k}^l}}.
\end{align*}
This results in the relative error estimator $\eta^{{rel}}_{N,{S_{P_k}^l}}$ on slab ${S_{P_k}^l}$ defined by
\begin{align}\label{eq:relative_error_estimator}
    \eta^{{rel}}_{N,{S_{P_k}^l}}
    = \frac{\eta_{N,{S_{P_k}^l}}}{J\left(u_{S_{P_k}^l}\right)}
    \approx \frac{\eta_{N,{S_{P_k}^l}}}{J\left(u_{N,{S_{P_k}^l}}\right) + \eta_{N,{S_{P_k}^l}} }.
\end{align}
%

%%%%%%%%%%%%%%%%%%%%%%%
%	NUMERICAL TESTS   %
%%%%%%%%%%%%%%%%%%%%%%%
\section{Numerical tests}
\label{sec:numerical_tests}
In order to demonstrate our methodology, we perform numerical tests on three different problem configurations. For the first two numerical tests, we perform computations for the heat equation in 1+1D and 2+1D. For the former, we use a linear goal functional and for the latter, we use a nonlinear goal functional. To demonstrate the flexibility of our temporal discretization, we use Gauss-Legendre quadrature points in time for the heat equation and a $\dG(1)$ time discretization.
As the third problem configuration, we consider a 3+1D cantilever beam as a benchmark problem from elastodynamics. For this problem, we use Gauss-Lobatto quadrature points in time, which are the support points for conventional time-stepping schemes, and we use a $\dG(2)$ time discretization.

All our computations have been performed on a personal computer with an Intel i5-7600K CPU @ 3.80GHz × 4 and 16GB of RAM. The space-time FEM codes have been written in deal.II \cite{dealii2019design, dealII94} and the reduced-order modeling has been performed with NumPy \cite{NumPy2020} and SciPy \cite{SciPy2020}. The data between the codes is exchanged via the hard disk.

\subsection{1+1D Heat equation} \label{subsec:1+1d_heat_numerical}
For our first numerical test, we construct a 1+1D heat equation problem; see Formulation \ref{form:variational_heat}. We consider the spatial domain $\Omega = (0,1)$ and the temporal domain $I = (0,4)$. We use a single moving heat source that changes its temperature after each second and moves through the spatial domain with a heating interval width of $0.1$ from $x = 0.1$ to $x=0.9$ and then back to $x=0.1$. For this, we use the right-hand side function
\begin{align*}
    f(t, x) := \begin{cases}
        0.2 & t \in (0,1),\, -0.05 \leq x -0.4t -0.1 \leq 0.05,\\
        -0.5 & t \in (1,2),\, -0.05 \leq x -0.4t -0.1 \leq 0.05,\\
        1.0 & t \in (2,3),\, -0.05 \leq x +0.4(t-2) -0.9 \leq 0.05,\\
        -0.75 & t \in (3,4),\, -0.05 \leq x +0.4(t-2) -0.9 \leq 0.05.
    \end{cases}
\end{align*}
We use a zero initial condition, homogeneous Dirichlet boundary conditions, and the time-averaged mean value goal functional $J(u) := \frac{1}{4}\int_0^4\int_0^\frac{1}{2} u(t,x)\ \mathrm{d}x\ \mathrm{d}t$. We point out that the goal functional does not have support on the entire spatial domain, but only on its lower half $(0, \frac{1}{2}) \subsetneq \Omega$.

For the reduced-order model, we choose that the primal and dual reduced bases have to preserve $\varepsilon = 1 - 10^{-8}$ of the information. As previously stated, we resort to the relative error estimate $\eta^{{rel}}_{N,{S_{P_k}^l}}$ developed in (\ref{eq:relative_error_estimator}) and allow errors up to a tolerance of $1\%$. We consider this to be a reasonable tolerance for many applications. The full-order model is characterized by $n = 8,193$ and $q = 10,240$ DoFs in space and time, respectively. This gives us a total of $n\cdot q = 83,896,320$ space-time degrees of freedom. Further, the temporal domain is split up into $M = 5,120$ time slabs. For the incremental ROM, we choose a total amount of $K = 64$ parent-slabs on which the slabs are evenly distributed, i.e. $L = 80$. 

In Figure \ref{fig:reduced_solution_1d_heat}, we display the full-order space-time solution $u_h$ as well as the true error $u_h - u_N$ between the full-order space-time solution $u_h$ and the reduced-order space-time solution $u_N$ obtained using MORe DWR. Looking at the error,
we observe that the reduced-order model becomes less accurate for $x \in (\frac{1}{2},1)$ than for $x \in (0, \frac{1}{2})$, which is the spatial support of the goal functional.
This shows that our incremental POD is goal-oriented.
\begin{figure}[H]
    \centering
    \subfloat[space-time full-order solution $u_h$]{
    \includegraphics[width=0.8\textwidth]{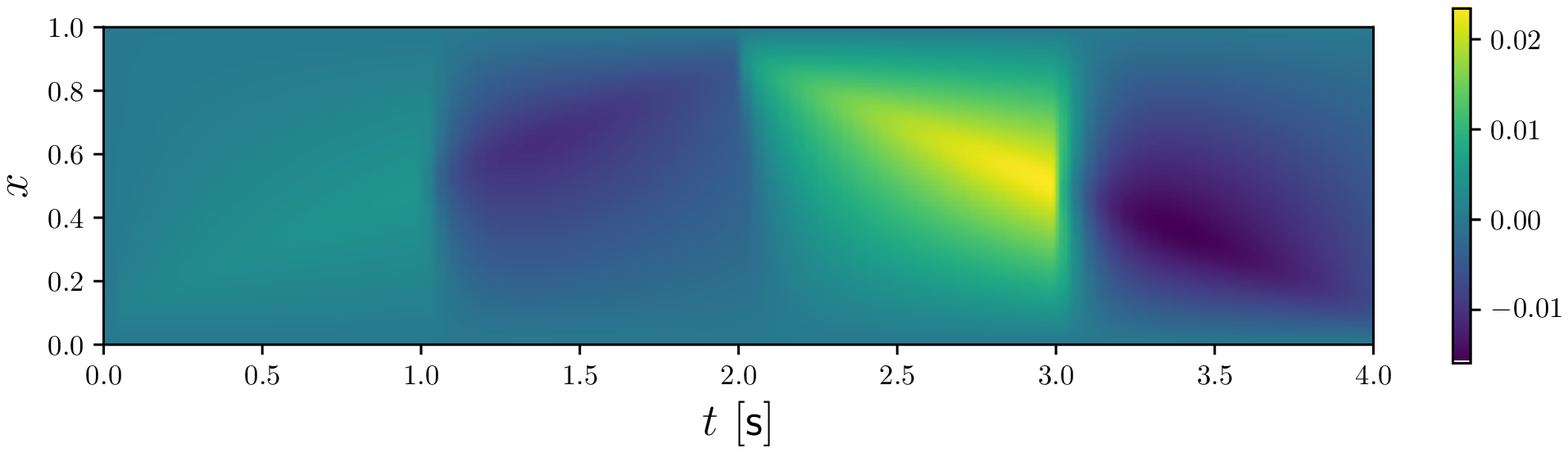}}%
    \\
    % \vspace{-0.5cm}
     \subfloat[space-time true error $u_h - u_N$]{
    \includegraphics[width=0.8\textwidth]{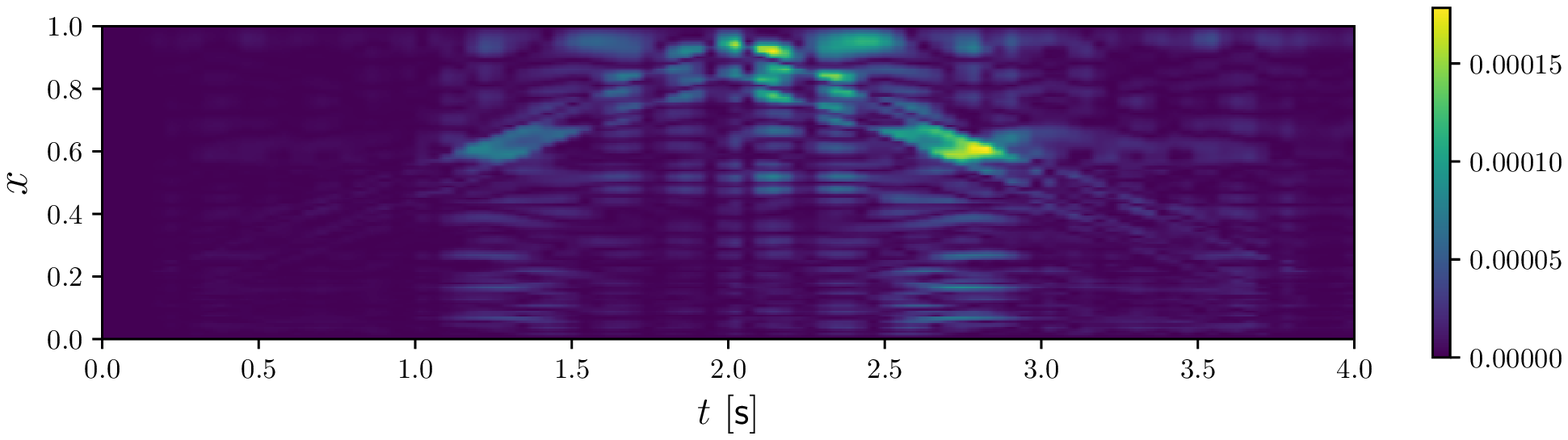}}%
    \caption{Space-time solution and error for the 1+1D heat equation.}
    \label{fig:reduced_solution_1d_heat}
\end{figure}
In Figure \ref{fig:cost_functional_1d_heat}, we compare the time trajectories of the goal functional restricted to each time slab for the full-order space-time solution $u_h$ and the reduced-order space-time solution $u_N$. 
It illustrates that both trajectories are not distinguishable 
from each other indicating that the reduced-order model captures the temporal evolution of the quantity of interest accurately even with changing solution behavior. This good approximation quality can also be observed when regarding the time-averaged cost functional. We obtain 
$J(u_h) = 2.0608 \cdot 10^{-4}$ and $J(u_N) = 2.0583\cdot 10^{-4}$ yielding a relative error of $\eta_{max} = 0.1210\%$.

\begin{figure}[H]
    \centering
    \includegraphics[width=0.65\textwidth]{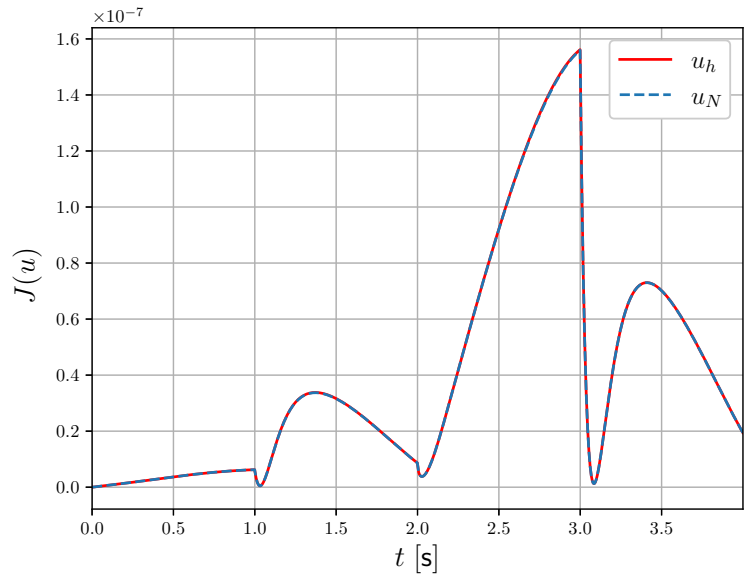}
    \caption{Temporal evolution of cost functional for the 1+1D heat equation. }
    \label{fig:cost_functional_1d_heat}
\end{figure}
We compare the temporal error estimate with the exact temporal error on each slab in Figure~\ref{fig:error_estimator_1d_heat}. The general tendencies of both curves are similar. The exact error is on average more than one magnitude smaller than the error tolerance of $1\%$ (indicated by a green dashed line). The error estimate exceeds the tolerance for a short moment after $t = 3\,s$. Such an overestimation can cause the execution of unnecessary full-order solves. Nonetheless, an overestimation is of less harm to the approximation quality since the exact error still meets the tolerance. 
\begin{figure}[H]
    \centering
    \includegraphics[width=0.65\textwidth]{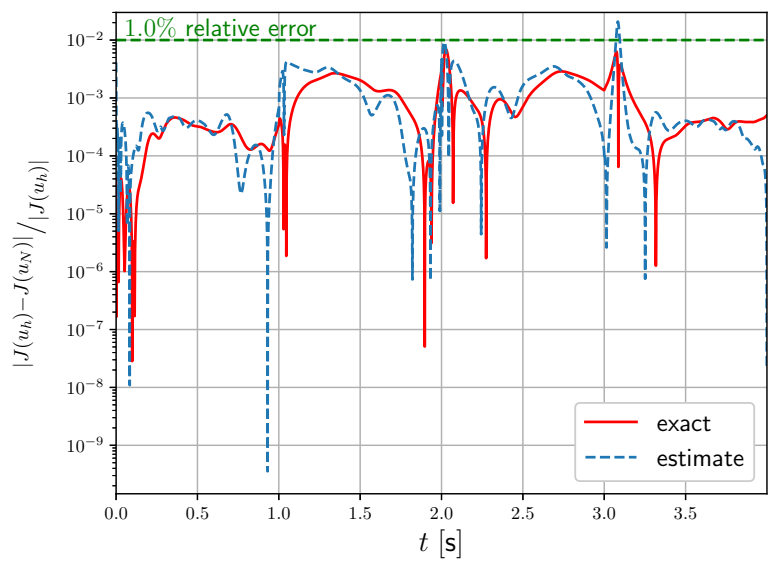}
    \caption{Temporal evolution of the time interval-wise error estimator compared to the true error for the 1+1D heat equation.}
    \label{fig:error_estimator_1d_heat}
\end{figure}
Table \ref{tab:comparison:1d_heat} gives an overview of simulation results for different error tolerances comprised between $0.1\%$ and $10\%$. The listed characteristics are: the relative error, computational speedup, the total  number of FOM solves, POD basis sizes for the primal and dual problem, prediction capability of error estimator, and the effectivity index from (\ref{eq:effectivity_index}). Here, the number of FOM solves sums up all primal and dual solves and the basis sizes are shown in the pattern primal/dual. The prediction capability is visualized by means of a confusion matrix. The prediction on each slab is assigned to one of the four cases: 
\begin{align*}
    &\text{error} > \text{tol} \land \text{estimate} < \text{tol} \; \quad | \quad  \;
    \text{error} < \text{tol} \land \text{estimate} > \text{tol} \; \quad | \quad \\
    &\text{error} > \text{tol} \land \text{estimate} > \text{tol} \; \quad | \quad \;
    \text{error} < \text{tol} \land \text{estimate} < \text{tol}.
\end{align*}

We note that the four possible scenarios are sorted according to the severity of the consequences of their occurrence. So, the first two cases indicate mispredictions of the estimator.
Here, the first case is the least desirable since then the error estimator underestimates the true error, which can lead to an insufficiently small reduced basis. The second case is less fatal since then the true error is being overestimated by the error estimator, which can cause the reduced basis to be slightly larger than necessary. The last two cases are less harmful since the estimate correctly predicts the error. However, the third case is also not optimal, since it shows that after the incremental basis enrichment, in the validation loop, there are still slabs on which the error tolerance is being exceeded. Therefore, we expect that for an efficient method (almost) all slabs fall in the last category, where the error tolerance is being met and the error estimate is also below the tolerance.

We observe that with a rise in the tolerance the relative error as well as the speedup increase. Note that the relative error is almost a magnitude smaller than the tolerance, which aligns with the results of Figure \ref{fig:error_estimator_1d_heat}. The difference in magnitude can be explained by the fact that the tolerance has to be met slabwise while the relative error is evaluated over the whole time domain. The speedup is explained by the decreasing amount of FOM solves and smaller POD bases for both the primal and dual problem w.r.t. the given tolerance. Furthermore, the estimator predicts the relationship of the error to the tolerance in approximately $98-99\%$ of the cases right with most of the incorrect predictions being overestimations. Similarly, for the effectivity index, a slight worsening can be seen with rising tolerance since then replacing the full-order dual solution in the error estimator with the reduced-order dual solution introduces additional errors.
\begin{table}[H]
    \centering
    \resizebox{\columnwidth}{!}{%
        \begin{tabular}{ |p{2.5cm}||p{2.5cm}|p{2.0cm}|p{2.5cm}|p{2.0cm}|p{2.8cm}|p{2.5cm}|  }
             \hline
             Tolerance & Relative error & Speedup & FOM solves & Basis size & Prediction & Effectivity \\
             \hline
              $0.1 \%$ & $0.0130 \%$  & $10.9$  & 68 & 39 | 36 &\phantom{0}0 | 38 | \phantom{0}0 | 5082& $1.0065$ \\
             $1 \%$ & $0.1210 \%$  & $	12.2$ & 40 & 25 | 22&\phantom{0}0 | 31 | \phantom{0}0 | 5089& $1.0071$ \\
              $2 \%$ & $0.3370 \%$  & $13.2$  & 38& 24 | 21	 &\phantom{0}0 | 41 | \phantom{0}0 | 5079& $1.0074$ \\
             $5 \%$ & $1.2019 \%$ & $15.2$ & 32& 21 | 18	 &28 | 48 | 18 | 5026& $1.0125$ \\
             $10 \%$ & $1.7645 \%$ & $18.6$ & 30 & 20 | 17&\phantom{0}0 | 73 | \phantom{0}0 | 5047& $1.0404$ \\
             \hline
        \end{tabular}
    }
    \caption{Performance of MORe DWR for the 1+1D heat equation depending on the tolerance in the goal functional.}
    \label{tab:comparison:1d_heat}
\end{table}
Finally, we demonstrate the incremental nature of our MORe DWR approach in Figure \ref{fig:basis_evolution_1d_heat}. In this context, we illustrate the on-the-fly basis generation by plotting the primal and dual reduced basis size over the time domain and compare its evolution for the tolerances of $1\%$ and $10\%$. The results indicate a steeper and more granular increase of both the primal and dual basis size if the tolerance is smaller. Nevertheless, we observe a steady basis size for all bases and tolerances after around 2 seconds. If we take the movement of the heat source into account, this is exactly the time the source needs to travel once through the spatial domain. Thus, after this, no new information is added to the system that would trigger a further basis enrichment.
\begin{figure}[H]
    \centering
    \includegraphics[width=0.485\textwidth]{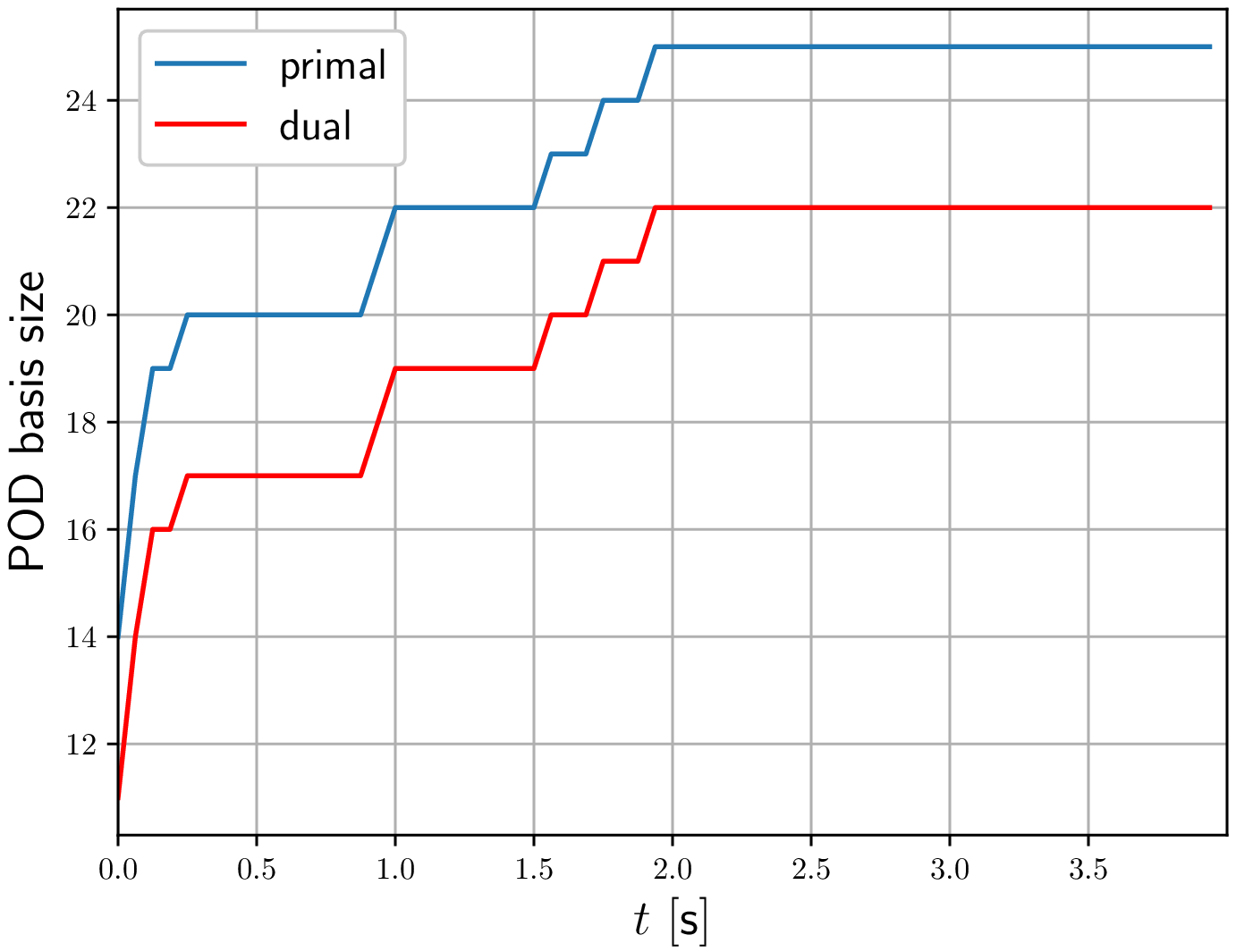}
    \quad
    \includegraphics[width=0.485\textwidth]{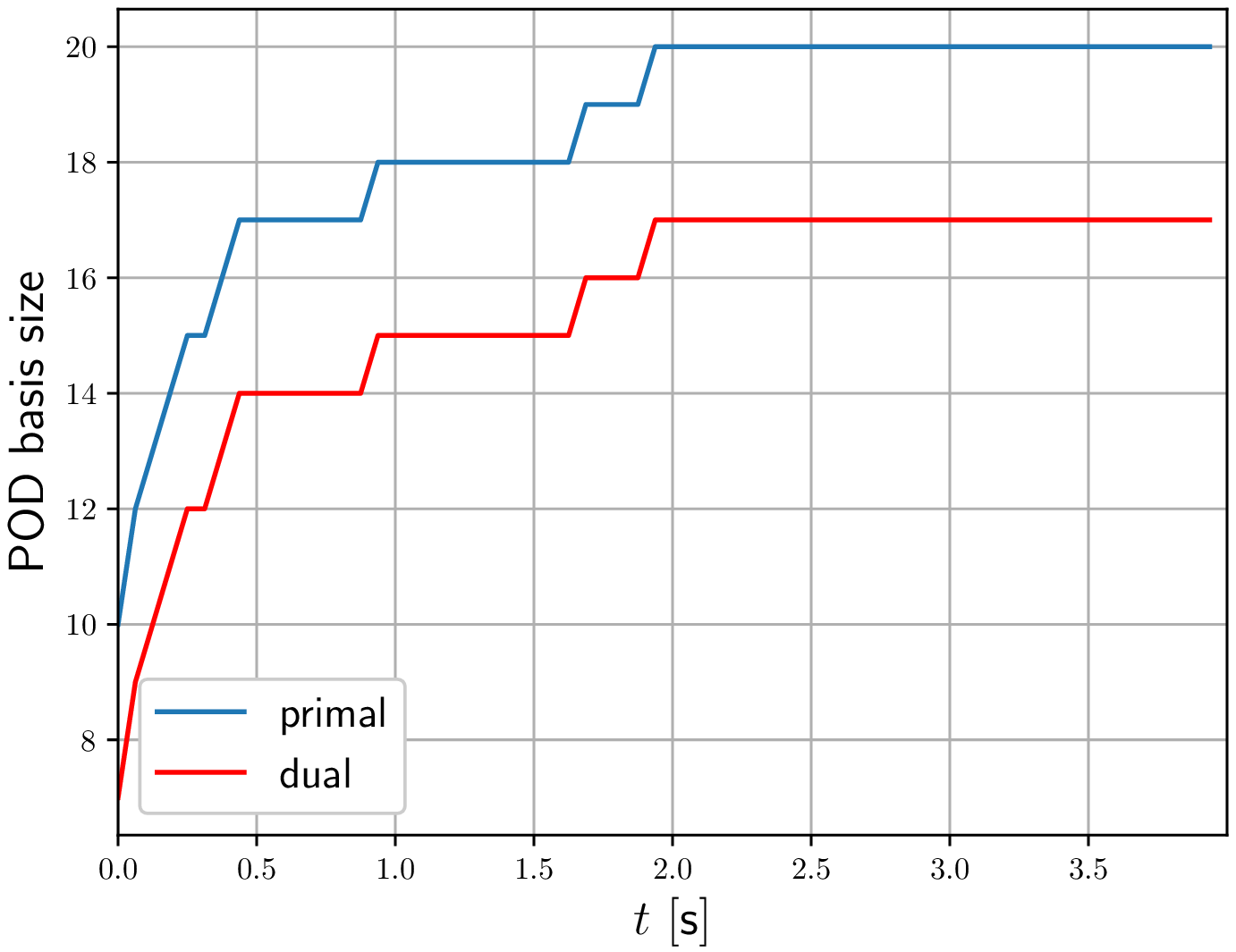}
    \caption{Temporal evolution of the reduced basis size for a relative error tolerance of $1\%$ (left) and $10\%$ (right) for the 1+1D heat equation.}
    \label{fig:basis_evolution_1d_heat}
\end{figure}
\subsection{2+1D Heat equation}
\label{sec:2d_heat}
In the second numerical experiment, we test MORe DWR on a 2+1D heat equation problem. We consider the spatial domain $\Omega = (0,1)^2$ and the temporal domain $I = (0,10)$. We create a moving heat source of oscillating temperature that rotates around the midpoint of the spatial domain $\Omega$ as shown in Figure \ref{fig:fom_2d_heat_snapshot}. For this, we use the right-hand side function
\begin{align*}
    f(t, x) := \begin{cases}
        \sin(4 \pi t)  & \text{if } (x_1 - p_1)^2 + (x_2 - p_2)^2 < r^2,\\
        0 & \text{else},
    \end{cases}
\end{align*}
with $x = (x_1, x_2)$, midpoint $p = (p_1, p_2) = (\frac{1}{2}+\frac{1}{4} \cos(2 \pi t), \frac{1}{2}+\frac{1}{4} \sin(2 \pi t))$ and radius of the trajectory $r=0.125$. In addition, a zero initial condition and homogeneous Dirichlet boundary conditions are applied. In contrast to the goal functional in Section \ref{subsec:1+1d_heat_numerical}, we test the method for a nonlinear cost functional  $J(u) := \frac{1}{10}\int_0^{10} \int_\Omega u(t,x)^2\ \mathrm{d}x\ \mathrm{d}t$.

\begin{figure}[H]
    \centering
     \subfloat[t = 0.074]{
    \includegraphics[width=4cm]{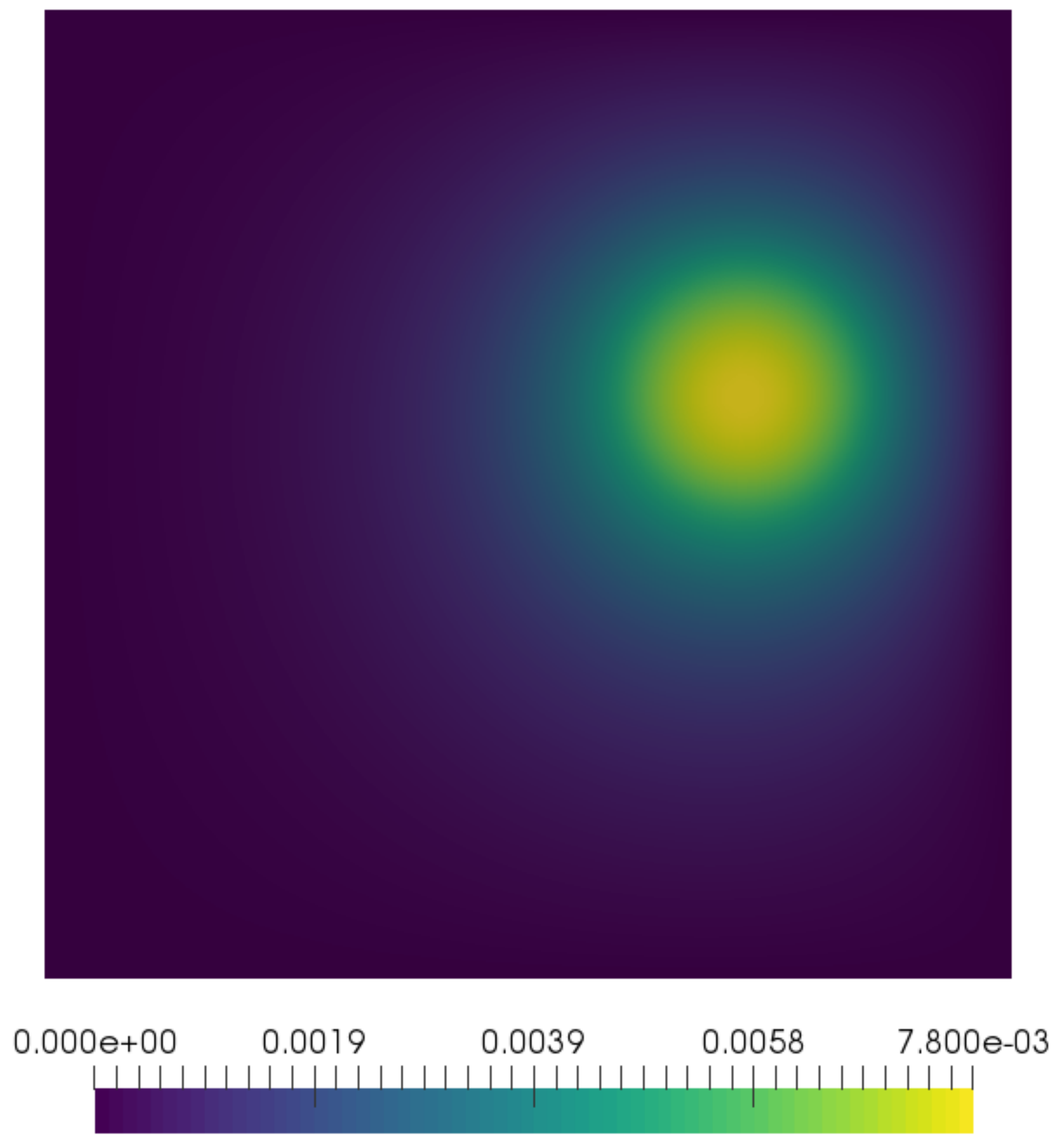}}%
     \subfloat[t = 0.416]{
    \includegraphics[width=4cm]{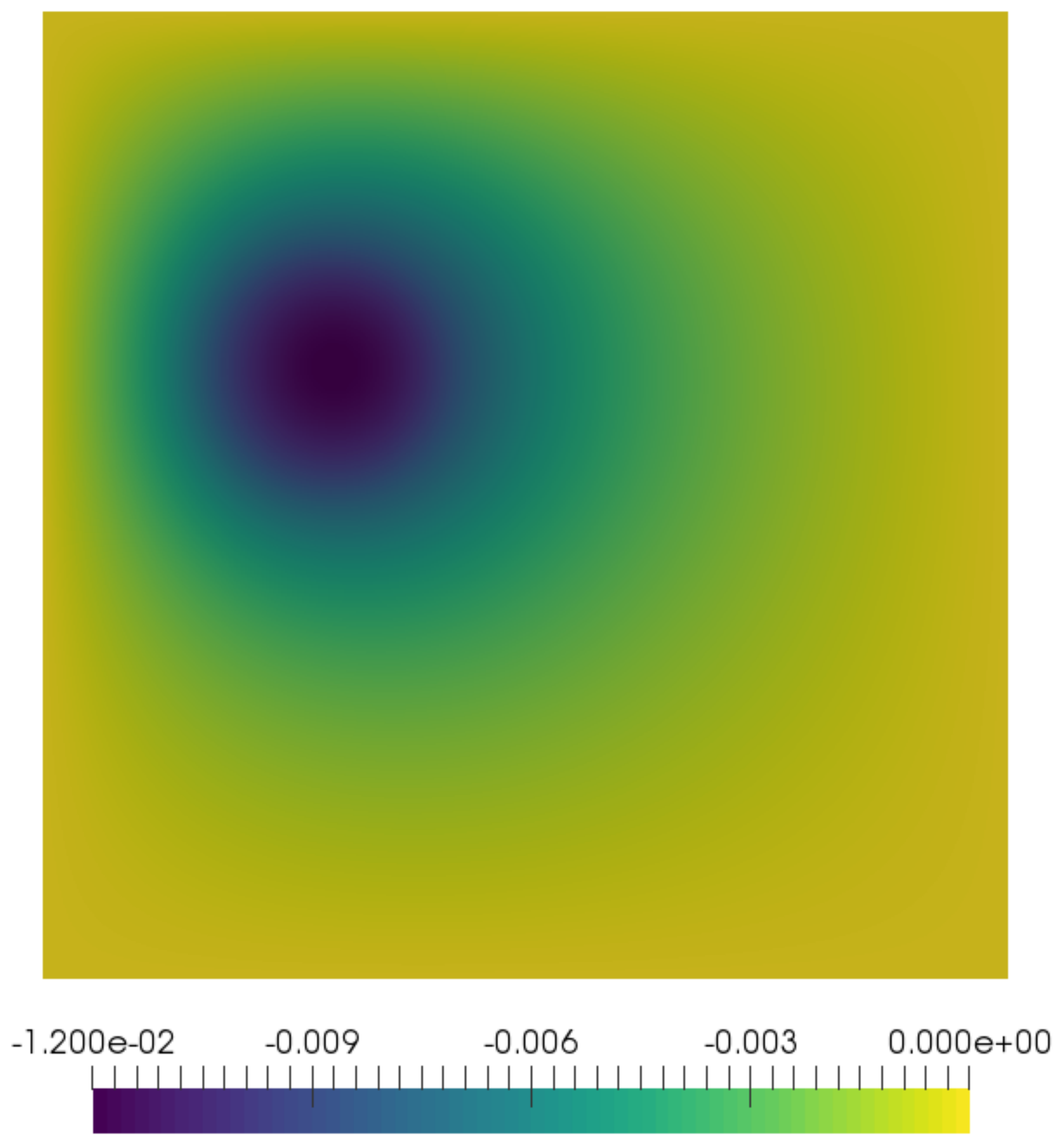}}%
    \\
    \subfloat[t = 0.611]{
    \includegraphics[width=4cm]{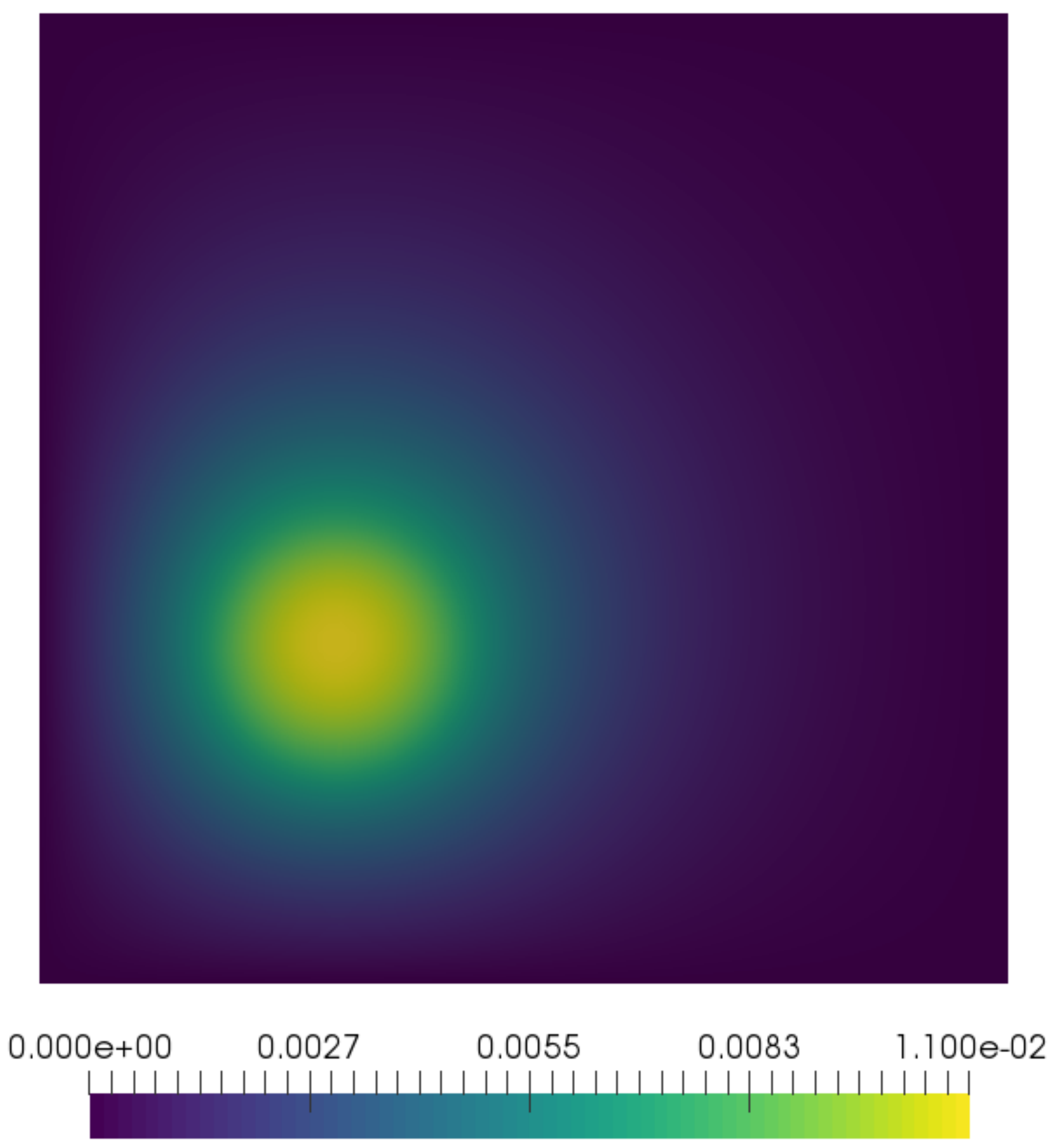}}%
    \subfloat[t = 0.885]{
    \includegraphics[width=4cm]{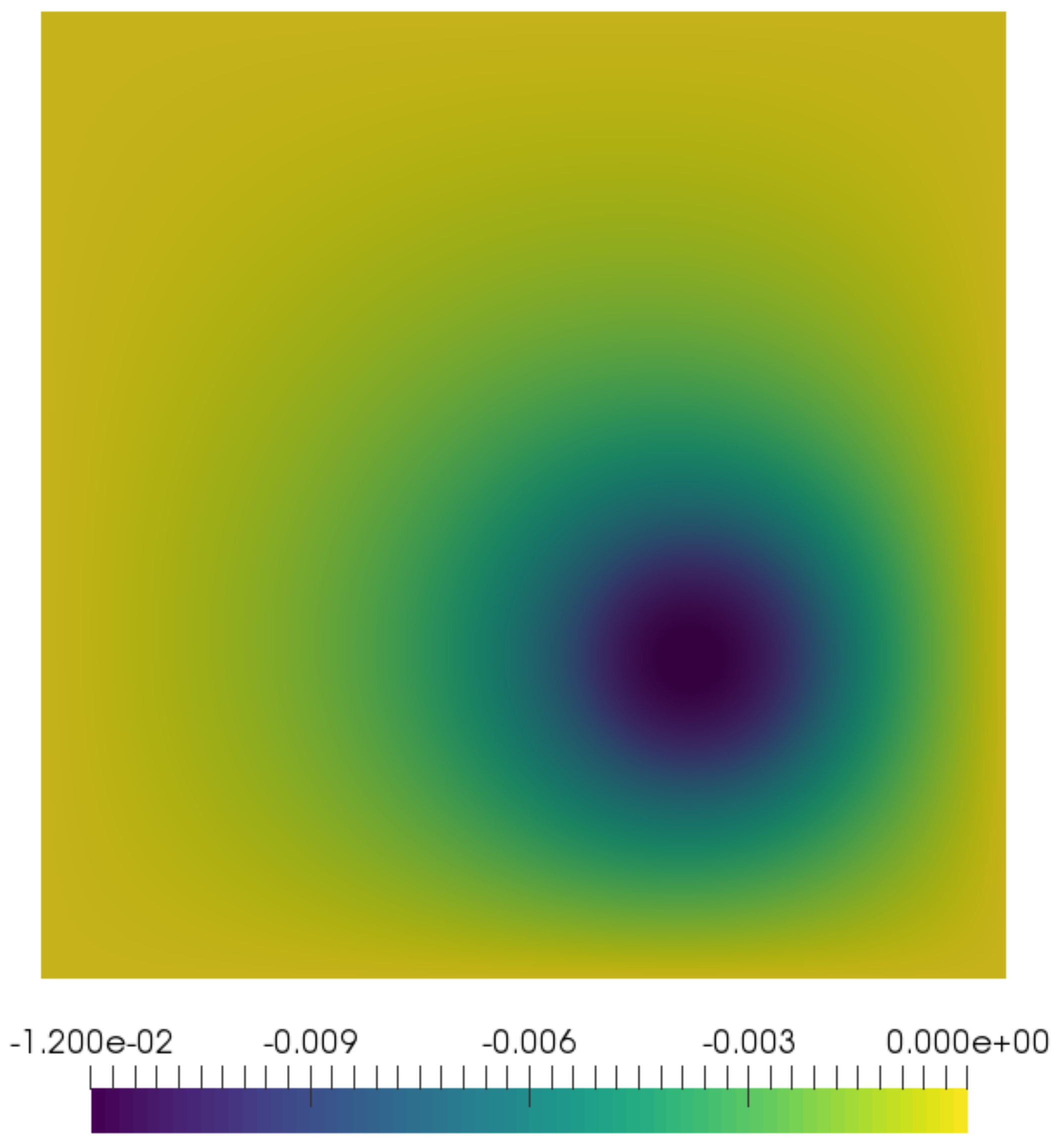}}%
    \caption{Full-order solution snapshots for the 2+1D heat equation.}
    \label{fig:fom_2d_heat_snapshot}
\end{figure}

For the reduced-order model, we choose that the primal and dual reduced bases have to preserve $\varepsilon = 1-10^{-8}$ of the information. Similar to the previous one-dimensional scenario, we resort to the relative error estimate $\eta^{{rel}}_{N,{S_{P_k}^l}}$ and allow errors up to a tolerance of $1\%$. The full-order model is characterized by $n = 4,225$ and 
$q = 4,096$ DoFs in space and time, respectively. This gives us a total of 
$n \cdot q = 17,305,600$ space-time degrees of freedom. Further, the temporal domain is split up into $M = 2,048$ time slabs. For the incremental ROM, we choose a total amount of $K = 128$ parent-slabs on which the slabs are evenly distributed, i.e. $L = 16$.

Firstly, in Figure \ref{fig:cost_functional_2d_heat} we compare the time trajectories of the goal functional restricted to each time slab for the full-order space-time solution $u_h$ and the reduced-order space-time solution $u_N$. 
It illustrates that both trajectories are not distinguishable from each other although the solution behavior is constantly changing. 
Furthermore, good approximation quality can also be observed when regarding the time-averaged cost functional. We obtain $J(u_h) = 6.4578 \cdot 10^{-5}$ and $J(u_N) = 6.4577\cdot 10^{-5}$ yielding a relative error of $\eta_{max} = 0.0016\%$.
This implies that the incremental ROM can replicate nonlinear cost functionals within a given tolerance. 
\begin{figure}[H]
    \centering
    \includegraphics[width=0.65\textwidth]{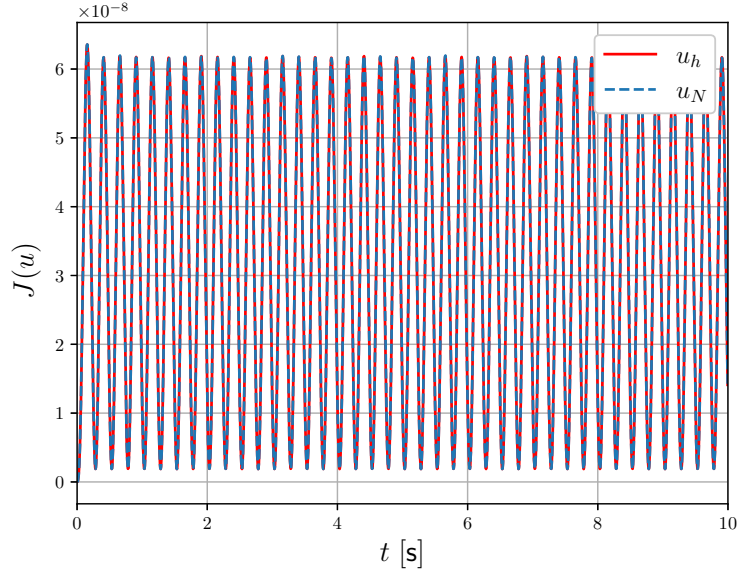}
    \caption{Temporal evolution of cost functional for 2+1D heat equation.}
    \label{fig:cost_functional_2d_heat}
\end{figure}
In Figure \ref{fig:error_estimator_2d_heat}, the exact temporal errors and their estimation on each slab are compared. Further, for illustration we indicate the error tolerance of $1\%$ in this plot. The results show that both the exact and estimated errors meet the given error tolerance on all slabs. Overall, the estimate shows a similar trajectory to the exact error. However, we can observe spikes in the exact error that are not completely covered by the estimation. Nevertheless, these deflections remain without consequences.
\begin{figure}[H]
    \centering
    \includegraphics[width=0.65\textwidth]{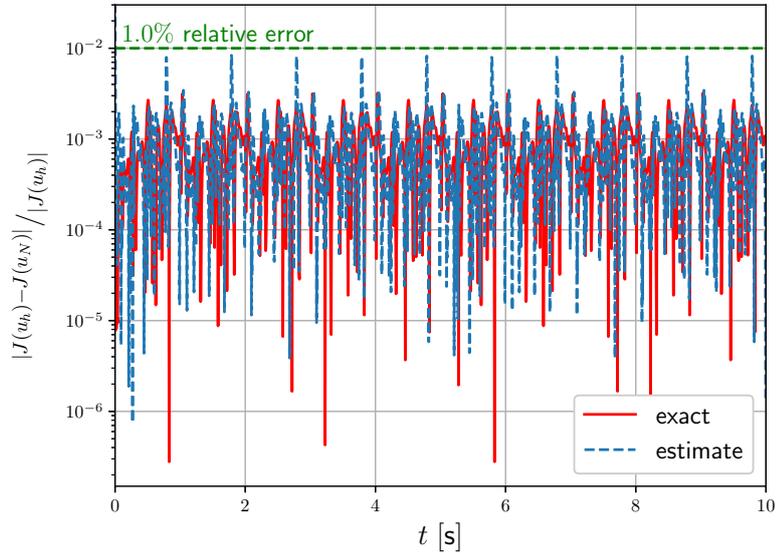}
    \caption{Temporal evolution of the time interval-wise relative error estimator compared to the true error for the 2+1D heat equation.}
    \label{fig:error_estimator_2d_heat}
\end{figure}
Table \ref{tab:comparison:2d_heat} presents simulation results for a range of error tolerances. The quantities we consider are the following: the relative error, computational speedup, the total  number of FOM solves, POD basis sizes for the primal and dual problem, prediction capability of the error estimator, and the effectivity index. For definitions of these quantities, we refer to Section \ref{subsec:1+1d_heat_numerical}. We can observe that with a rise in the tolerance the relative error as well as the speedup increase. Again, the relative error is much smaller than the tolerance. Note in contrast to the 1D linear scenario the relaxation of the error tolerance has a greater impact on the speedup. 
This can be explained by the evolution of the amount of FOM solves and the POD bases w.r.t. the given tolerance. Furthermore, the estimator predicts the relationship of the error to the tolerance in approximately $94-99\%$ of the cases right with most of the incorrect predictions being overestimations. An exception exists for $\text{tol} = 10\%$ where a drop of $5\%$ in the prediction capability can be observed indicating the dual basis is too small to accurately estimate the error. An adapted tolerance for the information content of the dual basis could counteract that problem. Nevertheless, the obtained reduced cost functional still meets the error tolerance. The largest difference to the linear case holds the evaluation of the effectivity index. We observe that in contrast to the linear case, the effectivity indices show larger fluctuations around $1$, which have been expected due to the  nonlinear cost functional. However, the effectivity indices are still in an acceptable range yielding good results. 
Finally, we observe that for a large tolerance of $10 \%$ we have a few mispredictions, i.e. on $79$ slabs the true error is greater than the tolerance while the estimated error is smaller than the tolerance, and on $28$ slabs the error estimator is greater than the tolerance while the true error is smaller than the tolerance. Additionally, for this tolerance we have $17$ slabs on which both true and estimated errors are larger than the tolerance. This decay of MORe DWR performance can be explained by the replacement of the fine dual solution $z^{\text{fine}}$ in the DWR error estimator by the coarse dual solution $z^{\text{coarse}}$. If we make the POD bases for the primal and dual problems too small, then this approximation might cause additional errors and lead to a worse performance of our method.
\begin{table}[H]
    \centering
    \resizebox{\columnwidth}{!}{%
        \begin{tabular}{ |p{2.5cm}||p{2.5cm}|p{2.0cm}|p{2.5cm}|p{2.0cm}|p{2.8cm}|p{2.5cm}|  }
             \hline
             Tolerance & Relative error & Speedup & FOM solves & Basis size & Prediction & Effectivity \\
             \hline
              $0.1\%$ & $0.0019 \%$ & $7.7$ &150& 92 | 78 &\phantom{0}0 | 35 | \phantom{0}0 | 2013& $0.7524$ \\
             $1\%$ & $0.0017 \%$ & $27.5$ &80 & 55 | 44 &\phantom{0}0 | \phantom{0}1 | \phantom{0}0 | 2047& $0.2771$ \\
              $2\%$ & $0.0628\%$  & $29.6$ & 66 & 47 | 36 &\phantom{0}0 | \phantom{0}9 | \phantom{0}0 | 2039& $3.9181$ \\
             $5\%$ & $0.9162 \%$ & $44.8$  & 44& 33 | 25 &\phantom{0}0 | \phantom{0}1 | \phantom{0}0 | 2047& $1.2254$ \\
             $10\%$ & $0.9243 \%$ & $50.0$ &38& 31 | 23 &79 | 28 | 17 | 1924& $1.5474$ \\
             \hline
        \end{tabular}
    }   
    \caption{Incremental reduced-order modeling summary for the 2+1D heat equation depending on the tolerance in the goal functional.}
    \label{tab:comparison:2d_heat}
\end{table}
Lastly, Figure \ref{fig:basis_evolution_2d_heat} sketches the incremental nature of the MORe DWR approach. The on-the-fly basis generation is shown by plotting the primal and dual reduced basis size over the time domain and comparing its evolution for the tolerances of $1\%$ and $10\%$. The results indicate a steep increase of both the primal and dual basis sizes in the first second of the simulation that reflects one round trip of the oscillating heat source through the spatial domain. Again, for a more restrictive tolerance, the size of a reduced basis enhances in a faster fashion. After the first round trip of the heat source, the basis size remains almost unchanged with only one basis enlargement for the tolerance of $1\%$ at around $t = 4~\text{s}$. This is grounded in the periodic behavior of the chosen numerical experiment that does not add any further information to the system. Thus, less or no further basis enrichments have to be performed to meet the given error tolerance.

\begin{figure}[H]
    \centering
    \includegraphics[width=0.485\textwidth]{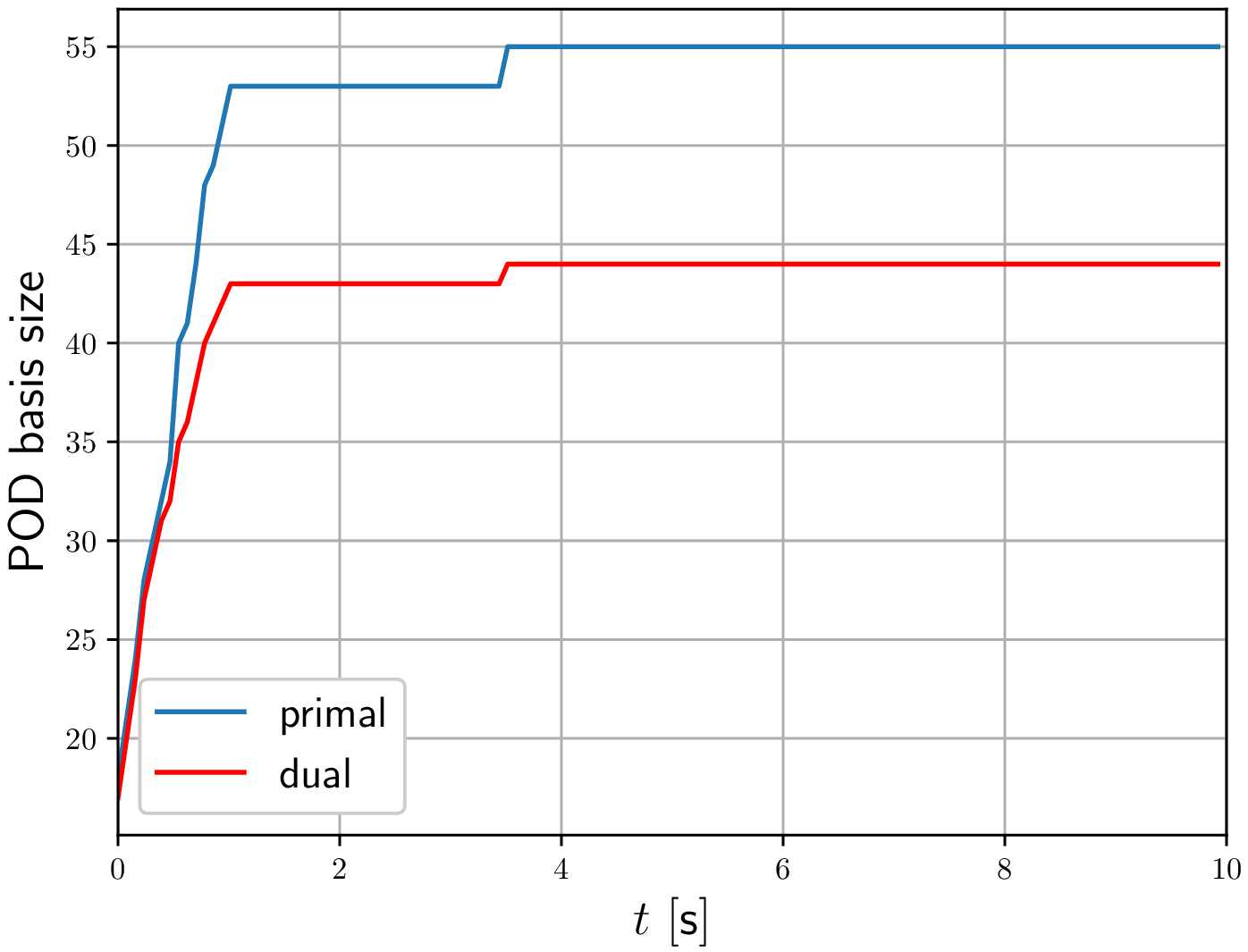}
    \quad
    \includegraphics[width=0.485\textwidth]{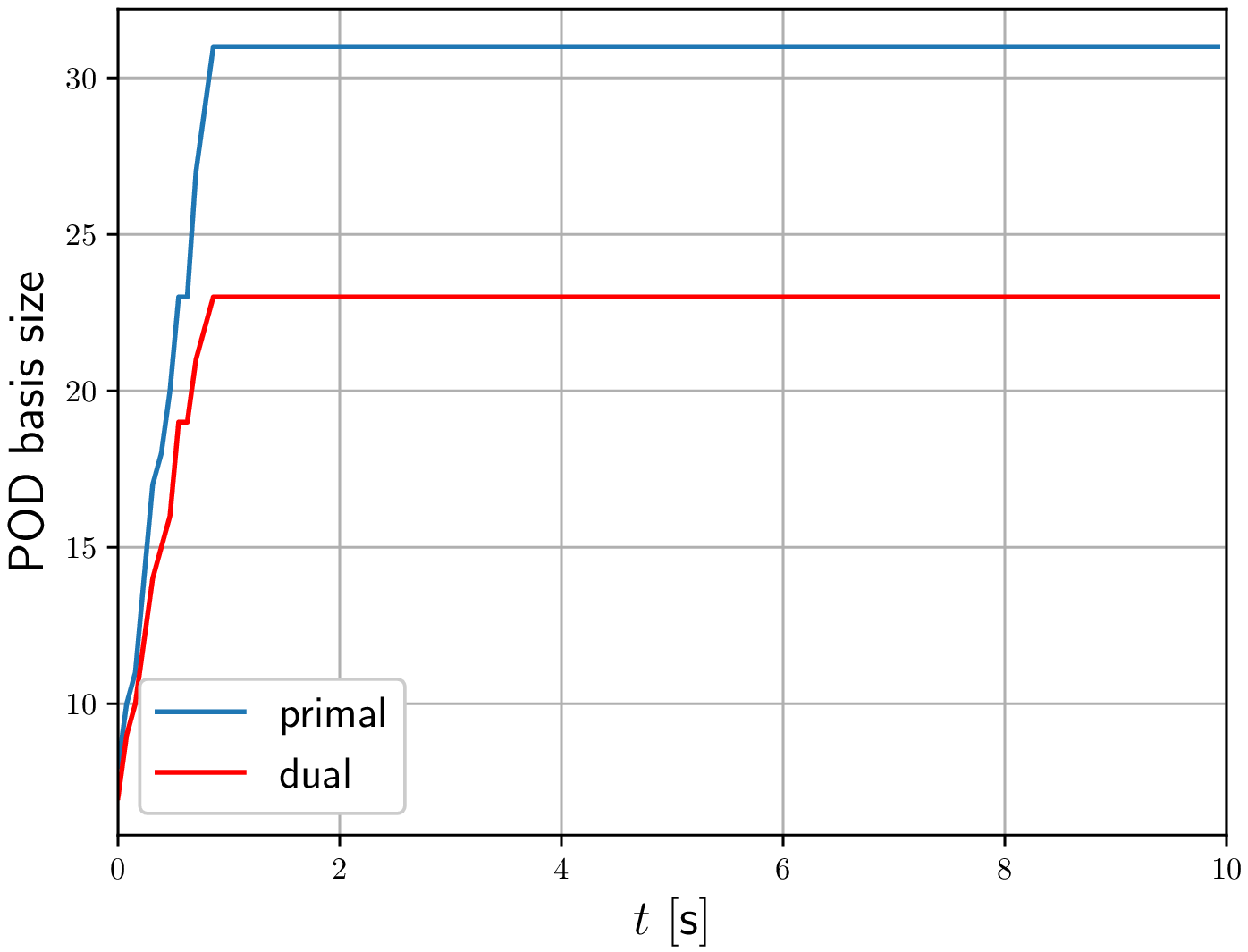}
    \caption{Temporal evolution of the reduced basis size for a relative error tolerance of $1\%$ (left) and $10\%$ (right) for the 2+1D heat equation.}
    \label{fig:basis_evolution_2d_heat}
\end{figure}
\subsection{3+1D Elastodynamics equation}
In the third numerical experiment, we choose Formulation \ref{form:variational_elasto} and investigate the method on a 3+1D elastodynamics problem. We consider a rectangular beam spanning the spatial domain $\Omega = (0,6) \times (0,1) \times (0,1)$. Further, the temporal domain $I = (0,40)$ is regarded. We induce an oscillation in the vertical direction by defining a force $f(t,x)$ acting on the upper boundary of the beam $\Gamma_\text{up} = (0,6) \times (0,1) \times \{x_3 = 1\}$. In the first part of the experiment the beam is lifted up by means of the acting force as shown in Figure \ref{fig:fom_snapshot_elastodynamics}. Thereafter, the force is slowly eliminated such that the beam begins to swing. 

For this, we use 
\begin{align*}
    g(t) := \begin{cases}
        f_\text{max} \frac{t}{t_1} & x_3 = 1 \; \land \; t \leq t_1,\\
        f_\text{max} \left(1-\frac{t-t_1}{t_2-t_1}\right) & x_3 = 1 \; \land \; t_1 < t \leq t_2,\\
        0 & \text{else},
    \end{cases}
\end{align*}
with maximal acting force $f_\text{max} = 0.5$ and $t_1 = 5$ and $t_2 = 6$ being the time points until the force increases or decreases, respectively. Together with the beam being clamped at the boundary $\Gamma_{\text{clamped}} = \{ x_1 = 0\}\times (0,1) \times (0,1)$ this yields the boundary conditions in Section \ref{sec:elasto_equation}
\begin{align*}
    u &= 0  \qquad \text{in } I \times \Gamma_{\text{clamped}}, \\
    v &= 0  \qquad \text{in } I \times \Gamma_{\text{clamped}}, \\
    \sigma(u) \cdot n &= 0  \qquad \text{in } I \times \partial \Omega \setminus (\Gamma_{\text{clamped}} \cup \Gamma_\text{up}), \\
    \sigma(u) \cdot n &= g(t) \qquad \text{in } I \times \Gamma_{up}.
\end{align*}
Furthermore,  the homogeneous initial conditions are given by 
\begin{align*}
    u(0) &= 0 \qquad \text{in } \Omega, \\
    v(0) &= 0 \qquad \text{in } \Omega.
\end{align*}
We choose the time-averaged stress acting on the clamped boundary $\Gamma_\text{clamped}$ denoted by $J(u) := \frac{1}{40} \int_0^{40} \int_\Omega \sigma(u(t,x)) \cdot n\ \mathrm{d}x\ \mathrm{d}t$ as the cost functional. 
For the reduced-order model, we decide that the primal and dual reduced bases have to preserve $\varepsilon = 1-10^{-11}$  of the information. Again, we resort to the relative error estimate $\eta^{{rel}}_{N,{S_{P_k}^l}}$ and allow errors up to a tolerance of $1\%$. The full-order model is characterized by $n = 702$ and $q = 4,800$ DoFs in space and time, respectively. This gives us a total of $n \cdot q = 561,600$ space-time degrees of freedom. Further, the temporal domain is split up to $M = 1,600$ time slabs. For the incremental ROM, we choose a total amount of $K = 80$ parent-slabs on which the slabs are evenly distributed, i.e. $L = 20$.
\begin{figure}[H]
    \centering
    \includegraphics[width=8cm]{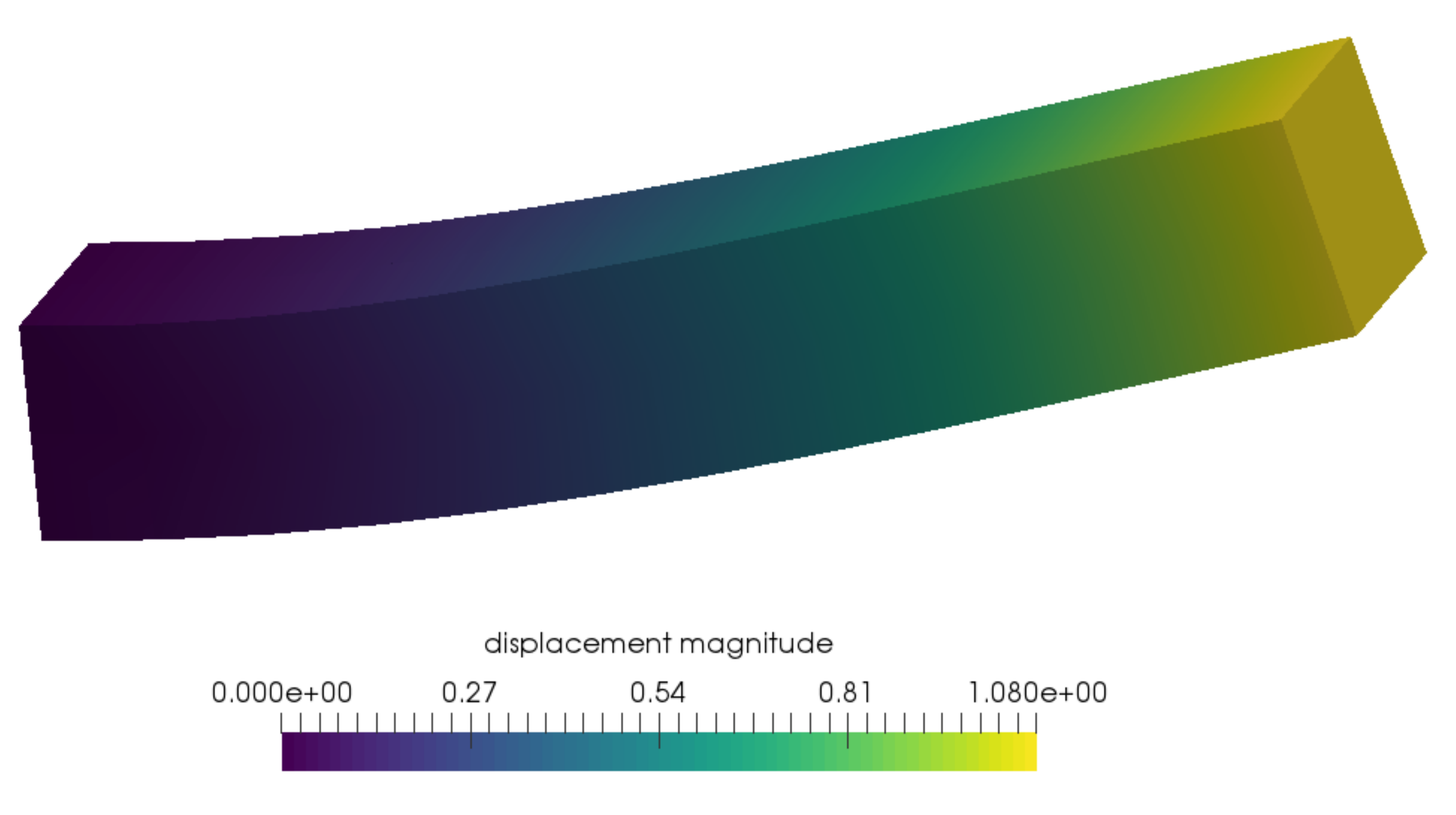}
    \caption{Full-order solution snapshot at t = 5.75 for the elastodynamics equation.}
    \label{fig:fom_snapshot_elastodynamics}
\end{figure}

We compare the time trajectories of the goal functional restricted to each time slab for the full-order space-time solution $u_h$ and the reduced-order space-time solution $u_N$ in Figure \ref{fig:cost_functional_elastodynamics}. The results show that both trajectories are indistinguishable from each other and the oscillating behavior can be mimicked by the reduced cost functional.
Furthermore, the good approximation quality can also be observed when regarding the time-averaged cost functional. We obtain $J(u_h) = -3.1114$ and $J(u_N) = -3.1115$ yielding a relative error of $\eta_{max} = 0.0035\%$ which is smaller than the desired error tolerance of $1\%$.
\begin{figure}[H]
    \centering
    \includegraphics[width=0.65\textwidth]{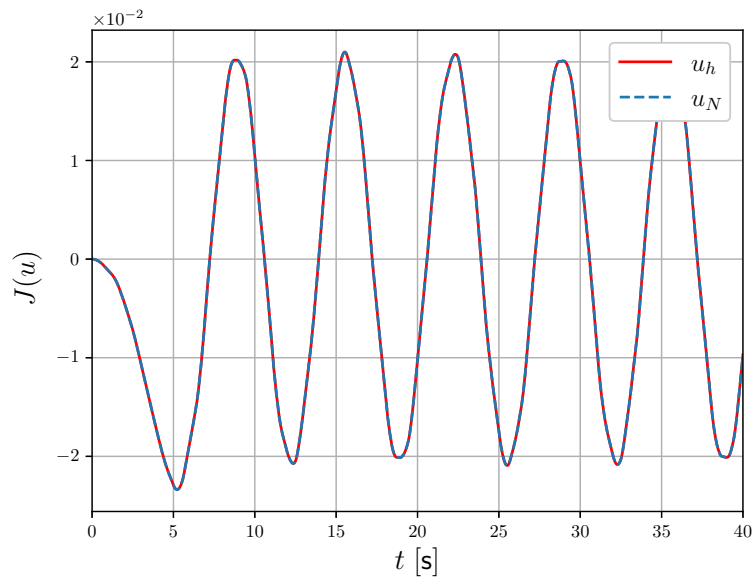}
    \caption{Temporal evolution of goal functional of the 3+1D elastodynamics equation.}
    \label{fig:cost_functional_elastodynamics}
\end{figure}
In Figure \ref{fig:error_estimator_elastodynamics}, we plot the exact temporal errors and their estimation on each slab for comparison. For illustration purposes, we indicate the error tolerance of $1\%$ in this plot. The results show that both quantities are on average in the same order of magnitude while the standard deviation of the real error appears to be larger. Thus, there exist spikes in the error that are not captured by the error estimation. Most of the time, this has no consequence but on one slab the error tolerance is exceeded slightly. 
\begin{figure}[H]
    \centering
    \includegraphics[width=0.65\textwidth]{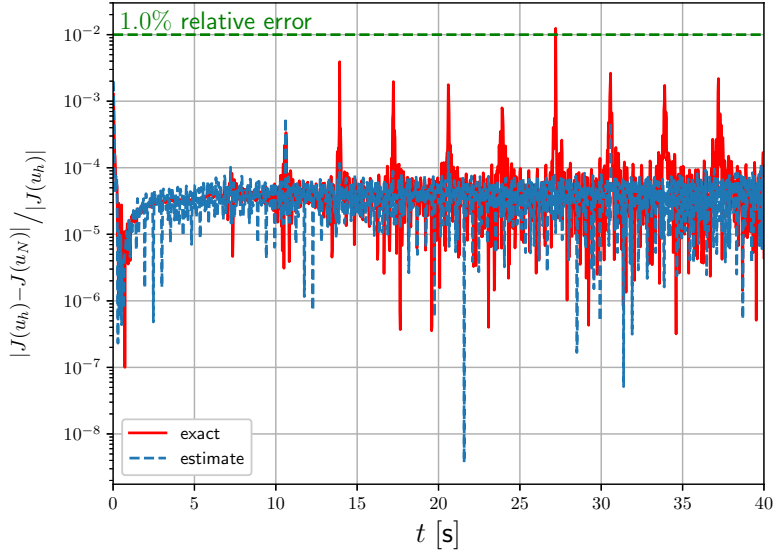}
    \caption{Temporal evolution of the time interval-wise relative error estimator compared to the true error of the 3+1D elastodynamics equation.}
    \label{fig:error_estimator_elastodynamics}
\end{figure}
In order to investigate the violation of the error tolerance, we present in Table \ref{tab:comparison:elastodynamics} simulation results for a range of error tolerances. Therefore, we show the relative error, the prediction measures and the effectivity indices to examine the error estimation. Further, the computational speedup, total number of FOM solves and the POD basis sizes for the primal and dual problems are displayed. For definitions of these quantities, we refer to Section \ref{subsec:1+1d_heat_numerical}. We can observe that while most of the mispredictions are poor underestimations of the error, there are only a few of them. In addition, the effectivity indices are near to the optimum of $1$ and the relative errors meet the tolerance in all scenarios. However, a small decay in the effectivity indices can be recognized for the reduced-order models with larger tolerance.
Additionally, when reviewing the performance measurement, we can determine differences to the previous heat problems. The resulting speedups as well as the FOM solves are near constant for all tolerances. Only for a tolerance of $10 \%$ we do see further improvements in speedups and basis size reduction. We also observe that the total amount of FOM solves and the size of the POD bases are not monotonically decreasing w.r.t. the error tolerance like in the heat equation setting. A reason for this behavior can be assigned to the behavior of the error itself.
Using a smaller tolerance can lead to more reduced basis enrichment early on. Larger tolerances lead to smaller initial reduced basis and so errors further on due to the small basis size, which is compensated by enlarging the basis in a later stage.

\begin{table}[H]
    \centering
\resizebox{\columnwidth}{!}{%
    \begin{tabular}{ |p{2.5cm}||p{2.5cm}|p{2.0cm}|p{2.5cm}|p{2.0cm}|p{2.8cm}|p{2.5cm}|  }
             \hline
             Tolerance & Relative error & Speedup & FOM solves & Basis size & Prediction & Effectivity \\
             \hline
             $0.1 \%$ & $0.0042 \%$ & $11.2$ & 44 &\phantom{0}98 | 79&26 | \phantom{0}0 | \phantom{0}1 | 1573& $1.0029$ \\
              $1 \%$ & $0.0035 \% $ & $12.1$ & 46 &107 | 82 &\phantom{0}1 | \phantom{0}0 | \phantom{0}0 | 1599&$1.0001$ \\
             $2 \%$ & $0.0001 \%$ & $10.6$ & 46 &113 | 82 &\phantom{0}0 | \phantom{0}0 | \phantom{0}0 | 1600& $0.9953$ \\
             $5 \%$ & $0.0040 \%$ & $12.6$ & 48&101 | 86&\phantom{0}0 | \phantom{0}0 | \phantom{0}0 | 1600&  $	0.9999$ \\
             $10 \%$ & $0.0200 \%$ &  $14.9$ & 38&\phantom{0}89 | 84&44 | \phantom{0}1 | \phantom{0}0 | 1555&$	0.9801$ \\
             \hline
        \end{tabular}
    }
    \caption{Incremental reduced-order modeling summary for the 3+1D elastodynamics equation depending on the tolerance in the goal functional.}
    \label{tab:comparison:elastodynamics}
\end{table}
Finally, the incremental nature of our MORe DWR approach is depicted in Figure \ref{fig:basis_evolution_3d_elasto}. We allow insights into the on-the-fly basis generation by plotting the primal and dual reduced basis size over the time domain and comparing its evolution for the tolerances of $1\%$ and $10\%$. Similar to the previous scenarios, we observe a steep increase in both the primal and dual basis sizes at the beginning of the simulation. In addition, we see further changes in the reduced basis sizes in the second half of the simulation. We see again that a tighter tolerance yields a larger primal reduced basis, i.e. more refinements of the reduced basis have been performed. Another curiosity of this numerical experiment is that for a tolerance of $1\%$, the reduced dual basis shrinks slightly at $t \approx 10~\text{s}$, due to an iPOD update with a snapshot that carries a lot of information.

\begin{figure}[H]
    \centering
    \includegraphics[width=0.485\textwidth]{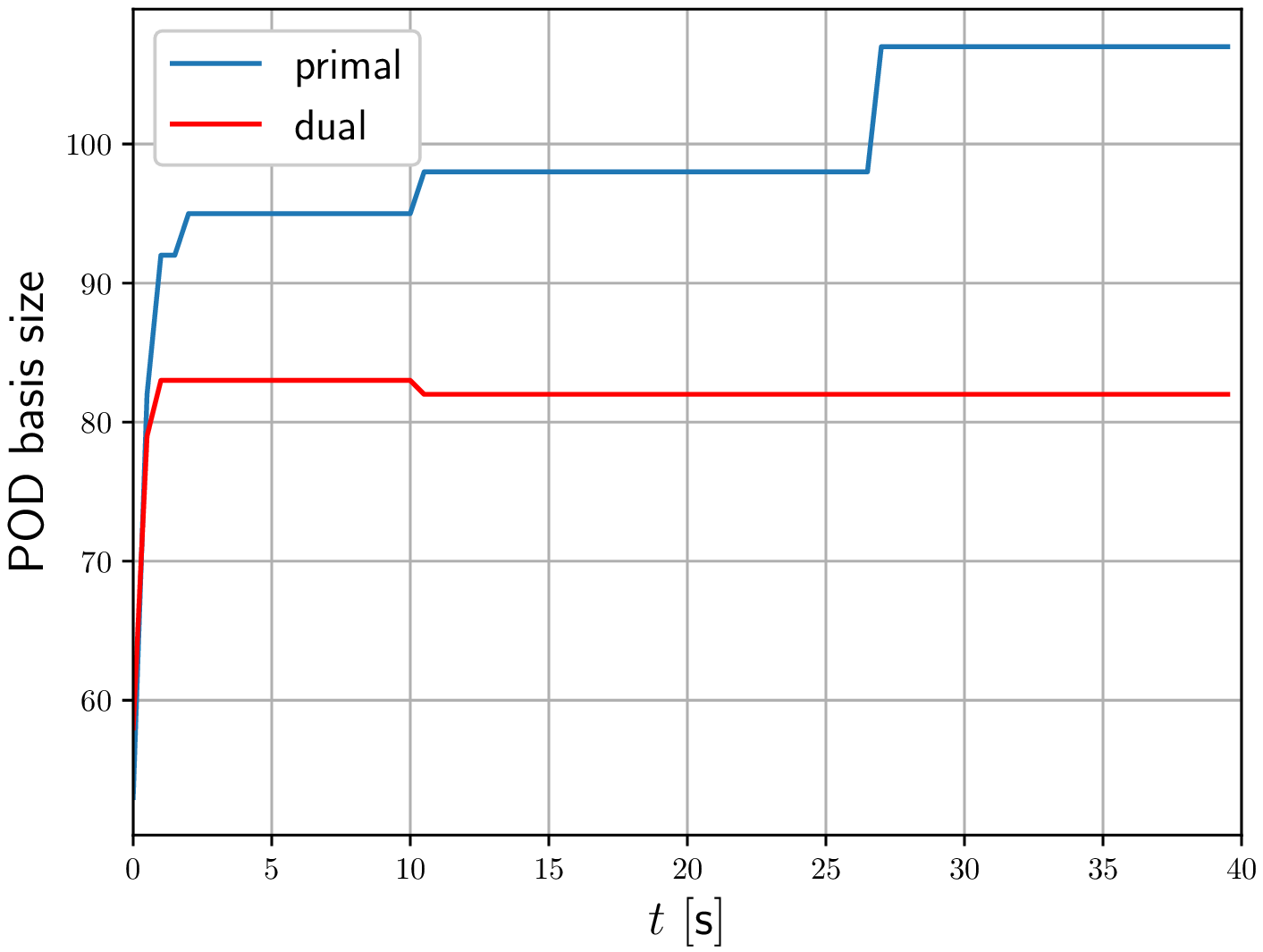}
    \quad
    \includegraphics[width=0.485\textwidth]{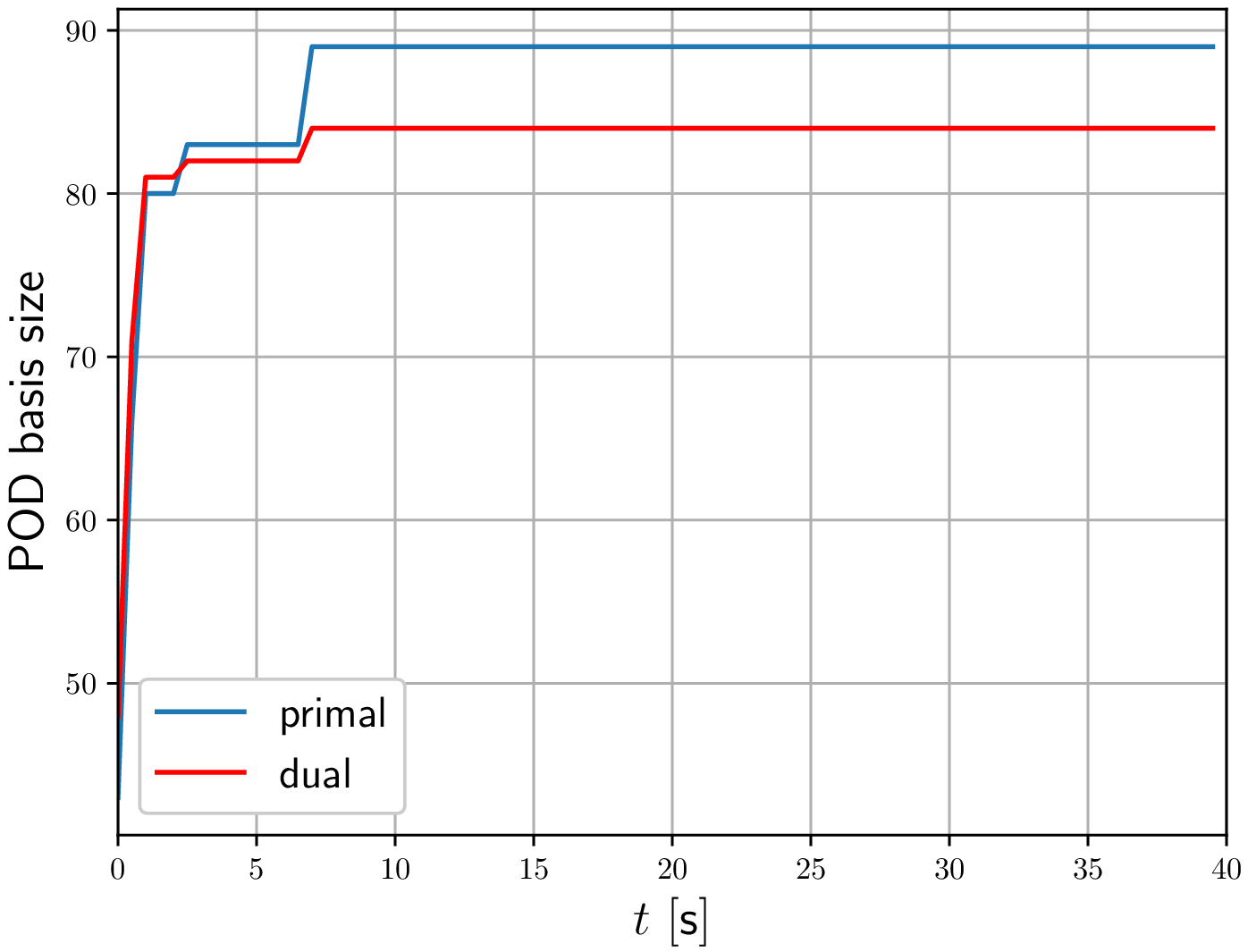}
    \caption{Temporal evolution of the reduced basis size for a relative error tolerance of $1\%$ (left) and $10\%$ (right) for the 3+1D elastodynamics equation.}
    \label{fig:basis_evolution_3d_elasto}
\end{figure}
%%%%%%%%%%%%%%%%%%%%%%%%%%%%%%%%%%%%%%%%%%%%%%%%%%%%%%%%%%%%
%%                   CONCLUSION                           %%
%%%%%%%%%%%%%%%%%%%%%%%%%%%%%%%%%%%%%%%%%%%%%%%%%%%%%%%%%%%%
\section{Conclusion and outlook}
\label{sec:conclusion}
In this work, we proposed a novel incremental POD-ROM method with on-the-fly basis enrichment based on space-time DWR error estimates for linear PDEs, namely the heat equation and elastodynamics, and linear and nonlinear goal functionals. This methodology can be applied to a wide class of problems since its efficiency has been demonstrated in Section \ref{sec:numerical_tests}.
The effectivity indices for linear problems are almost exactly one, which makes the error estimates reliable in practice, and for nonlinear goal functionals, we had also good effectivity indices. For nonlinear PDEs and goal functionals possibly full DWR is needed. Additionally, we had speedups of up to 50, while the error between the FOM and the ROM solution remained within our prescribed tolerance. Consequently, e.g. the expensive high-fidelity computations in the offline stage of the reduced basis method could be replaced by our incremental POD method. An interesting aspect for future work would be the extension of this method to dynamical, adaptive spatial meshes to further speed up the computations. 

%%%%%%%%%%%%%%%%%%%%%%%%%%%%%%%%%%%%%%%%%%%%%%%%%%%%%%%%%%%%
%%                  ACKNOWLEDGEMENTS                      %%
%%%%%%%%%%%%%%%%%%%%%%%%%%%%%%%%%%%%%%%%%%%%%%%%%%%%%%%%%%%%
\section*{Acknowledgements}
The authors acknowledge the funding of the German Research Foundation (DFG) within the framework of the International Research Training Group on  Computational Mechanics Techniques in High Dimensions GRK 2657 under Grant Number 433082294. In addition, we thank Hendrik Geisler (Leibniz University Hannover, GRK 2657) for fruitful discussions and comments. The support of the French-German University through the French-German Doctoral college "Sophisticated Numerical and Testing Approaches" (CDFA-DFDK 19-04) is also acknowledged.
%%%%%%%%%%%%%%%%%%%%%%%%%%%%%%%%%%%%%%%%%%%%%%%%%%%%%%%%%%%%

\appendix

\section{Space-time linear system and dG(r) time-stepping formulation for elastodynamics}
\label{sec:elasto_time_stepping}

The space-time discretization of the elastodynamics equation on a slab with a single temporal element and a $\dG(r)$ in time discretization is discussed in this appendix. Using the fully discrete variational formulation \ref{form:elasto_with_jumps} of elastodynamics, we arrive at the linear equation system
\begin{align}\label{eq:elasto_dG_linear_system}
    \left[ C_k \otimes M_h + M_k \otimes K_h + D_k^1 \otimes M_h \right] U_m = F_m + \left[D_k^2 \otimes M_h\right] U_{m-1}    
\end{align}
where we use the spatial matrices
\begin{align*}
    M_h &= \left\{ (\varphi_h^{v,(j)},\varphi_h^{u,(i)}) + (\varphi_h^{u,(j)},\varphi_h^{v,(i)}) \right\}_{i,j = 1}^{\# \text{DoFs}(\mathcal{T}_h)}, \\
    K_h &= \left\{ (\sigma(\varphi_h^{u,(j)}),\nabla_x\varphi_h^{u,(i)}) + (\varphi_h^{v,(j)},\varphi_h^{v,(i)}) \right\}_{i,j = 1}^{\# \text{DoFs}(\mathcal{T}_h)}
\end{align*}
and the temporal matrices
\begin{align*}
    M_k &= \left\{ \int_{I_m}\varphi_k^{(j)}\cdot \varphi_k^{(i)}\ \mathrm{d}t \right\}_{i,j = 1}^{\# \text{DoFs}(I_m)}, \\
    C_k &= \left\{ \int_{I_m}\partial_t\varphi_k^{(j)}\cdot \varphi_k^{(i)}\ \mathrm{d}t \right\}_{i,j = 1}^{\# \text{DoFs}(I_m)}, \\
    D_k^1 &= \begin{pmatrix}
        1 & 0 & \cdots & 0 \\
        0 & 0 & & \\
        \vdots & & \ddots & \\
        0 & & & 0
    \end{pmatrix}, \qquad
    D_k^2 = \begin{pmatrix}
        0 & \cdots & 0 & 1\\
         & & 0 & 0 \\
        & \iddots  & & \vdots \\
        0 & & & 0
    \end{pmatrix}.
\end{align*}
Here, the solution vector $U_m$ for the $\dG(r)$ method in time with temporal quadrature points $t_1, \dots, t_{r+1}$ is given by
\begin{align*}
    U_{m} = \begin{pmatrix}
        U_{m} (t_1) \\
        \vdots \\
        U_{m} (t_{r+1})
    \end{pmatrix} = \begin{pmatrix}
        u_{m} (t_1) \\
        v_{m} (t_1) \\
        \vdots \\
        u_{m} (t_{r+1}) \\
        v_{m} (t_{r+1})
    \end{pmatrix}.
\end{align*}
To derive the $\dG(r)$ time-stepping formulation, we now only need to evaluate the temporal matrices $M_k$ and $C_k$ by integrating over $(0,k)$, where $k := t_m - t_{m-1}$ is the time step size, and by plugging in the $\dG-Q^r$ basis functions on $(0,k)$, which coincide with the $Q^r$ basis functions since we only have one single element and use Gauss-Lobatto quadrature.

\subsection{dG(1) formulation of elastodynamics}
\label{ref:subsec_app}
By inserting the basis functions $\varphi_k^{(1)} = 1 - \frac{t}{k}, \varphi_k^{(2)} = \frac{t}{k}$ into the temporal matrices $M_k$ and $C_k$ we get
\begin{align*}
    \int_0^k\varphi_k^{(1)}\cdot \varphi_k^{(1)}\ \mathrm{d}t  &= \int_0^k \left( 1 - \frac{t}{k} \right)^2 \ \mathrm{d}t = \int_0^k 1 - \frac{2t}{k} + \frac{t^2}{k^2}\ \mathrm{d}t = \frac{k}{3} = \int_0^k\varphi_k^{(2)}\cdot \varphi_k^{(2)}\ \mathrm{d}t, \\
    \int_0^k\varphi_k^{(1)}\cdot \varphi_k^{(2)}\ \mathrm{d}t &= \int_0^k\varphi_k^{(2)}\cdot \varphi_k^{(1)}\ \mathrm{d}t = \int_0^k \left( 1 - \frac{t}{k} \right) \cdot \frac{t}{k} \ \mathrm{d}t = \int_0^k \frac{t}{k} - \frac{t^2}{k^2}\ \mathrm{d}t = \frac{k}{6},
\end{align*}
as well as
\begin{align*}
    \int_0^k \partial_t\varphi_k^{(1)}\cdot \varphi_k^{(2)}\ \mathrm{d}t  &= \int_0^k \partial_t \left( 1 - \frac{t}{k} \right) \cdot \frac{t}{k} \ \mathrm{d}t = \int_0^k - \frac{t}{k^2} \ \mathrm{d}t = -\frac{1}{2} = \int_0^k\partial_t \varphi_k^{(1)}\cdot \varphi_k^{(1)}\ \mathrm{d}t, \\
    \int_0^k\partial_t \varphi_k^{(2)}\cdot \varphi_k^{(2)}\ \mathrm{d}t &= \int_0^k \partial_t\left(\frac{t}{k}\right) \cdot \frac{t}{k} \ \mathrm{d}t = \int_0^k \frac{t}{k^2}\ \mathrm{d}t = \frac{1}{2} = \int_0^k\partial_t \varphi_k^{(2)}\cdot \varphi_k^{(1)}\ \mathrm{d}t.
\end{align*}
Consequently, the $\dG(1)$ time-stepping formulation for elastodynamics reads
\begin{align*}
    \left[ \frac{1}{2}\begin{pmatrix}
        1 & 1 \\ -1 & 1
    \end{pmatrix} \otimes M_h + 
    \frac{k}{6}\begin{pmatrix}
        2 & 1 \\
        1 & 2
    \end{pmatrix} \otimes K_h \right] \begin{pmatrix}
        U_m(t_{m-1}) \\ U_m(t_{m})
    \end{pmatrix} = \begin{pmatrix}
        F_m(t_{m-1}) + U_{m-1}(t_{m-1})M_h \\ F_m(t_{m})
    \end{pmatrix},
\end{align*}
where we use the fact that the temporal quadrature points for $\dG(1)$ are $t_{m-1}$ and $t_m$.

\subsection{dG(2) formulation of elastodynamics}
Repeating the procedure from Section \ref{ref:subsec_app} with quadratic basis functions, we arrive at the $\dG(2)$ time-stepping formulation for elastodynamics
\begin{align*}
    \left[ \frac{1}{6}\begin{pmatrix}
        3 & 4 & -1 \\ -4 & 0 & 4 \\ 1 & -4 & 3
    \end{pmatrix} \otimes M_h + 
    \frac{k}{30}\begin{pmatrix}
        4 & 2 & -1 \\
        2 & 16 & 2 \\
        -1 & 2 & 4
    \end{pmatrix} \otimes K_h \right] \begin{pmatrix}
        U_m(t_{m-1}) \\ U_m(t_{m-\frac{1}{2}}) \\ U_m(t_{m})
    \end{pmatrix} = \begin{pmatrix}
        F_m(t_{m-1}) + U_{m-1}(t_{m-1})M_h \\ F_m(t_{m-\frac{1}{2}}) \\ F_m(t_{m})
    \end{pmatrix},
\end{align*}
where we use the fact that the temporal quadrature points for $\dG(2)$ are $t_{m-1}$, $t_{m-\frac{1}{2}} := t_{m-1} + \frac{k}{2}$ and $t_m$.
\begin{remark}
    The $\dG(1)$and $\dG(2)$ formulations can also be found in Section 7.1 and Section 7.2 in \cite{Richter2013efficient} for an ODE model. 
\end{remark}

%

%%%%%%%%%%%%%%%%%%%
%	BIBLIOGRAPHY  %
%%%%%%%%%%%%%%%%%%%
\bibliographystyle{abbrv}
\bibliography{lit.bib}

\end{document}